\newtheorem{theorem}{Theorem}[section]
\newtheorem{lemma}[theorem]{Lemma}
\newtheorem{prop}[theorem]{Proposition}
\newtheorem{corollary}[theorem]{Corollary}
\theoremstyle{definition}
\newtheorem{definition}[theorem]{Definition}
\newtheorem{rem}[theorem]{Remark}
\newtheorem{example}[theorem]{Example}
\newcommand\pf{\begin{proof}}
\newcommand\epf{\end{proof}}
\newcommand{\cmdblackltimes}{\mathop{\raisebox{0.2ex}{\makebox[0.92em][l]{${\scriptstyle\blacktriangleright\mathrel{\mkern-4mu}<}$}}}}
\numberwithin{equation}{section}
\begin{document} 

\title[Mini-course on Hopf algebras]
{Mini-course on Hopf algebras\\
---Hopf crossed products---}

\author[Akira Masuoka]{Akira Masuoka}
\address{Akira Masuoka: 
Institute of Mathematics, 
University of Tsukuba, 
Ibaraki 305-8571, Japan}
\email{akira@math.tsukuba.ac.jp}


\maketitle



\section*{Introduction}

Hopf crossed products, or in other words, cleft comodule algebras
form a special but important class in Hopf-Galois extensions. To discuss this
interesting subject, we will start with the more familiar group crossed products,
and then see that they are naturally generalized by Hopf crossed products;
these Hopf crossed products are characterized as Hopf-Galois extensions with normal basis; see 
Theorem \ref{thm:DT}.  
After showing this characterization due to Doi and Takeuchi, we will proceed 
to two applications of Hopf crossed products---the equivariant
smoothness of Hopf algebras, and
the tensor product decomposition in super-commutative Hopf superalgebras; see Theorems 
\ref{thm:equivariant_smoothness}, \ref{thm:tensor_product_decomposition}. 

\vspace{3mm}
For simplicity we work over a field, though all the results in Sections 1, 2 hold true
over any commutative ring. Let $k$ denote the field over which we work. 
Vector spaces, algebras, coalgebras
and Hopf algebras are those over $k$. The unadorned tensor product $\otimes$ is taken over $k$. 
All algebras are supposed to be non-zero.

\section{Group crossed products}\label{sec:group}

In what follows, $\Gamma$ denotes a group, whose identity element is denoted by 1. Let $A$ be a
$\Gamma$-graded algebra. Thus, $A$ decomposes as a direct sum 
\[ A = \bigoplus_{g\in \Gamma} A_g \]
of sub-vector spaces $A_g$, $g \in \Gamma$, so that 
\[ 1 \in A_1, \quad A_gA_h\subset A_{gh}, \quad g, h \in \Gamma. \]
Set $B = A_1$, the neutral component. Then each component $A_g$ is a $B$-bimodule, and the product
maps give $B$-bimodule maps, 
\[ \mu_{g,h} : A_g \otimes_B A_h \to A_{gh},\quad g, h \in \Gamma. \]
We say that $A$ is \emph{strongly graded}, if $A_gA_h = A_{gh}$, or namely if $\mu_{g,h}$ are surjective
for all $g, h \in \Gamma$. 

\begin{lemma}\label{lem:strongly_graded}
Let $A = \bigoplus_{g\in \Gamma} A_g$ be a $\Gamma$-graded algebra with $B = A_1$, as above. 

(1) Fix $g \in \Gamma$. The $B$-bimodules $A_g$, $A_{g^{-1}}$ together with the $B$-bimodule maps
\[ \mu_{g, g^{-1}} : A_g \otimes_B A_{g^{-1}} \to B, \quad 
\mu_{g^{-1}, g} : A_{g^{-1}} \otimes_B A_g \to B\] 
form a Morita context \cite[p.527]{R}. 

(2) TFAE:
\begin{itemize}
\item[(a)] $A$ is strongly graded;
\item[(b)] For each $g \in \Gamma$, the Morita context obtained above is strict, i.e., 
$\mu_{g,g^{-1}}$ and $\mu_{g^{-1},g}$ are both surjective; 
\item[(c)] $\mu_{g,h}$ are bijective for all $g, h \in \Gamma$. 
\end{itemize}
\end{lemma}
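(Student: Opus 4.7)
The Morita context structure is essentially formal once one writes down the data. I would note that $B=A_1$ is a subalgebra, each $A_g$ is a $B$-bimodule via the algebra multiplication, and the two pairings $\mu_{g,g^{-1}}$, $\mu_{g^{-1},g}$ land in $B=A_1$ because $gg^{-1}=1=g^{-1}g$. Since these pairings are just restrictions of the algebra multiplication, the associativity axiom of a Morita context---namely
\[
\mu_{g,g^{-1}}(a\otimes b)\,c = a\,\mu_{g^{-1},g}(b\otimes c),\qquad
\mu_{g^{-1},g}(b\otimes a)\,b' = b\,\mu_{g,g^{-1}}(a\otimes b'),
\]
for $a\in A_g$, $b,b'\in A_{g^{-1}}$, $c\in A_g$---reduces to the associativity of multiplication in $A$.

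\textbf{Plan for Part (2), the easy implications.} I would dispatch (a) $\Rightarrow$ (b) and (c) $\Rightarrow$ (a) immediately: strong gradation makes every $\mu_{g,h}$ surjective, which specializes to (b); and bijective clearly implies surjective, giving (c) $\Rightarrow$ (a). The substantive implications are (b) $\Rightarrow$ (a) and (a) $\Rightarrow$ (c).

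\textbf{Plan for (b) $\Rightarrow$ (a).} Fix $g,h\in\Gamma$. By hypothesis $\mu_{g,g^{-1}}$ is surjective, so there exist finitely many $u_i\in A_g$, $v_i\in A_{g^{-1}}$ with $\sum_i u_iv_i=1$. For any $x\in A_{gh}$, I would write
\[
x \;=\; 1\cdot x \;=\; \sum_i u_i\,(v_ix),
\]
observing that $v_ix\in A_{g^{-1}}A_{gh}\subset A_h$. This shows $A_{gh}\subset A_gA_h$, hence equality; since $g,h$ were arbitrary, $A$ is strongly graded.

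\textbf{Plan for (a) $\Rightarrow$ (c), the main obstacle.} This is where the Morita context must be used seriously. First I invoke a standard fact from Morita theory: in a Morita context where both pairings are surjective, the pairings are automatically bijective (essentially because the context then provides a Morita equivalence, and the canonical evaluation maps of an equivalence are isomorphisms). Applied here, (a) forces $\mu_{g,g^{-1}}$ and $\mu_{g^{-1},g}$ to be bijections for every $g$. With this in hand, I construct an explicit inverse to $\mu_{g,h}$. Let $\sum_i u_i\otimes v_i\in A_g\otimes_B A_{g^{-1}}$ be the (now canonically defined) preimage of $1$ under $\mu_{g,g^{-1}}$, and define
\[
\phi_{g,h}: A_{gh}\longrightarrow A_g\otimes_B A_h,\qquad
\phi_{g,h}(x) \;=\; \sum_i u_i\otimes v_ix.
\]
One checks $\mu_{g,h}\circ\phi_{g,h}(x)=\sum_i u_iv_ix=x$. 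For the reverse composition, given $a\in A_g$, $b\in A_h$, I compute
\[
\phi_{g,h}(ab) = \sum_i u_i\otimes v_iab = \sum_i u_i(v_ia)\otimes b = \Bigl(\sum_i u_iv_i\Bigr)a\otimes b = a\otimes b,
\]
where the second equality uses that $v_ia\in A_{g^{-1}}A_g\subset B$ so that the element crosses the tensor product over $B$.

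\textbf{Where the difficulty sits.} The routine parts are the Morita axioms in (1) and the surjectivity arguments. The one nontrivial step is the passage from surjectivity of $\mu_{g,g^{-1}},\mu_{g^{-1},g}$ to their \emph{bijectivity}, which I would cite from basic Morita theory rather than reprove; once that input is available, the explicit formula for $\phi_{g,h}$ settles bijectivity of all $\mu_{g,h}$ at once.
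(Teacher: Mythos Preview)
Your argument is correct. The paper proves the cycle (a)$\Rightarrow$(b)$\Rightarrow$(c)$\Rightarrow$(a), invoking the Morita Theorem to get $\mu_{g,g^{-1}}$ bijective and then running a commutative-square argument with $A_g\otimes_B A_{g^{-1}}\otimes_B A_{gh}$; you instead prove (b)$\Rightarrow$(a) by the same ``partition of unity'' trick $\sum_i u_iv_i=1$ and then (a)$\Rightarrow$(c) by writing down the explicit inverse $\phi_{g,h}$. These are two packagings of the same idea.

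One remark worth making: your invocation of Morita theory is actually superfluous. You cite it to get a \emph{canonical} preimage $\sum_i u_i\otimes v_i$ of $1$ under $\mu_{g,g^{-1}}$, but your verification that $\phi_{g,h}$ is a two-sided inverse of $\mu_{g,h}$ uses only the relation $\sum_i u_iv_i=1$, which is available from surjectivity alone. So your computation already shows directly that surjectivity of all $\mu_{g,g^{-1}}$ forces bijectivity of all $\mu_{g,h}$ (and hence, \emph{a posteriori}, of $\mu_{g,g^{-1}}$ itself), without any appeal to the Morita Theorem. In this sense your route is slightly more elementary than both the paper's and your own summary suggests.
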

\begin{proof}
(1) This follows by the associativity of the product on $A$. 

(2) (a) $\Rightarrow$ (b). Condition (a) is implies that $\mu_{g,g^{-1}}$, $\mu_{g^{-1},g}$ are surjective,
which is equivalent to (b).

(b) $\Rightarrow$ (c). Choose $g, h \in \Gamma$ arbitrarily, and assume (b). 
Then the Morita Theorem \cite[Remark~25A.16, p.528]{R} shows that $\mu_{g,g^{-1}}$ is
bijective. From the commutative diagram
\begin{equation*}
\begin{CD}
A_g \otimes_B A_{g^{-1}} \otimes_B A_{gh}  @>{\mathrm{id}\otimes \mu_{g^{-1},gh}}>> A_g\otimes_B A_h \\
@V{\mu_{g,g^{-1}}\otimes \mathrm{id}}VV @VV{\mu_{g,h}}V \\
B\otimes_B A_{gh} @>{\simeq}>> A_{gh}\, ,
\end{CD}
\end{equation*}
we see that $\mathrm{id}\otimes \mu_{g^{-1},gh}$ is injective, and $\mu_{g,h}$ is surjective. The last
surjectivity applied to $\mu_{g^{-1},gh}$ shows that the two maps are both bijective. 

(c) $\Rightarrow$ (a). Obvious. 
\end{proof}

We are going to 
construct \emph{group crossed products}, which form a special class of strongly graded algebras.
Let $B$ be an algebra. Suppose that for each $g \in \Gamma$, there is given an algebra
endomorphism $g \rightharpoonup : B \to B$, $b \mapsto g\rightharpoonup b$. In addition,
let $\sigma : \Gamma \times \Gamma \to B^{\times}$ be a map with values in the group $B^{\times}$
of all invertible elements in $B$, and suppose that the following conditions are satisfied.
\begin{equation}\label{G-data1}
1 \rightharpoonup b = b, \quad \sigma(g, 1) = 1 = \sigma(1, g), 
\end{equation}
\begin{equation}\label{G-data2}
[g \rightharpoonup(h\rightharpoonup b)]\, \sigma(g, h) 
= \sigma(g, h)\, (g h \rightharpoonup b), 
\end{equation}
\begin{equation}\label{G-data3}
[g \rightharpoonup\sigma(h, \ell )]\, \sigma(g, h \ell )
= \sigma(g, h )\, \sigma(g h, \ell ), 
\end{equation}
where $b \in B$, $g, h, \ell \in \Gamma$. Suppose $h = g^{-1}$ in \eqref{G-data2}. Since one then sees that
the composite of $g \rightharpoonup$ with $g^{-1} \rightharpoonup$ is an inner automorphism on $B$, it
follows that each $g \rightharpoonup$ gives an automorphism on $B$. 

\begin{lemma}\label{lem:group_crossed_product}
Let $A =\bigoplus_{g\in \Gamma} Bu_g$ be the free left $B$-module with basis $u_g$, $g \in \Gamma$. 
Define a product on $A$ by
\[ (b \, u_g)(c \, u_h) = b(g \rightharpoonup c)\, 
\sigma(g, h) \, u_{gh}, \]
where $b, c \in B$, $g, h\in \Gamma$. Then $A$ turns into a $\Gamma$-graded algebra with $A_g =Bu_g$. 
This $A$ has $u_1$ as its unit, and includes $B$ as the neutral component. Moreover, this is strongly graded. 
\end{lemma}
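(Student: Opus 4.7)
The plan is to verify four things in turn: (i) associativity of the product, (ii) that $u_1$ is a two-sided unit and $b\mapsto bu_1$ identifies $B$ with $A_1$, (iii) the grading property $A_gA_h\subset A_{gh}$, and (iv) strong gradedness.

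For (i), I would compute $((bu_g)(cu_h))(du_\ell)$ and $(bu_g)((cu_h)(du_\ell))$ by two applications of the defining formula, using that each $g\rightharpoonup$ is an algebra endomorphism so that it distributes over the internal product $c\cdot(h\rightharpoonup d)\cdot\sigma(h,\ell)$ produced inside the second parenthesization. Matching the two outputs reduces precisely to \eqref{G-data2} (to move $\sigma(g,h)$ past $(gh)\rightharpoonup d$) and \eqref{G-data3} (to rewrite $[g\rightharpoonup\sigma(h,\ell)]\,\sigma(g,h\ell) = \sigma(g,h)\,\sigma(gh,\ell)$). This bookkeeping is the technical heart of the argument, but is purely mechanical.

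For (ii), \eqref{G-data1} together with $g\rightharpoonup 1 = 1$ (which holds because $g\rightharpoonup$ is an algebra map) gives $u_1\cdot(bu_g) = bu_g = (bu_g)\cdot u_1$; the same formula yields $(bu_1)(cu_1) = bc\,u_1$, so $b\mapsto bu_1$ is an algebra embedding $B\hookrightarrow A$ whose image is $A_1$. For (iii), the inclusion $(Bu_g)(Bu_h)\subset Bu_{gh}$ is immediate from the product formula since $\sigma(g,h)\in B$.

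For (iv), I would invoke the remark preceding the lemma, which guarantees that each $g\rightharpoonup$ is in fact an automorphism of $B$; combined with $\sigma(g,h)\in B^{\times}$, this lets me solve $(g\rightharpoonup c)\,\sigma(g,h) = 1$ for some $c\in B$, so that $u_g\cdot(cu_h) = u_{gh}$. Hence $Bu_{gh}\subset(Bu_g)(Bu_h)$, and $A$ is strongly graded. The main obstacle is really just (i): because $B$ need not be commutative and $g\rightharpoonup$ need not fix elements of $B$, every rearrangement of factors must be accounted for by exactly one application of \eqref{G-data2} or \eqref{G-data3}, and one should set up the calculation so that these two identities are invoked in the right order.
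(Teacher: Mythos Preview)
Your proposal is correct and follows essentially the same route as the paper: reduce associativity to the two identities \eqref{G-data2}, \eqref{G-data3}, use \eqref{G-data1} for the unit, and verify strong gradedness by exhibiting $u_{gh}$ in $A_gA_h$. The only minor difference is in (iv): the paper simply observes $u_gu_h=\sigma(g,h)\,u_{gh}$, so $u_{gh}=\sigma(g,h)^{-1}u_gu_h\in BA_gA_h=A_gA_h$, which avoids invoking the automorphism remark altogether.
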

\begin{proof}
Condition \eqref{G-data1} ensures that $u_1$ is a unit, and Conditions \eqref{G-data2} and \eqref{G-data3}
ensure
\[ u_g(u_hb) =(u_gu_h)b, \quad u_g(u_hu_{\ell}) = (u_gu_h)u_{\ell}, \]
where $b \in B$, $g, h, \ell \in \Gamma$, respectively. It follows that $A$ is an algebra, which is easily 
seen to be $\Gamma$-graded with $A_1 = B$. Since 
\[ u_{gh} = \sigma(g,h)^{-1}u_g u_h \in BA_g A_h=A_gA_h, \]
it follows that $A_{gh} = Bu_{gh} \subset A_g A_h$, whence $A$ is strongly graded.   
\end{proof}

\begin{definition}\label{def:group_crossed_product}
The strongly $\Gamma$-graded algebra constructed above is denoted by 
$B \rtimes_{\sigma} \Gamma$ (with $\rightharpoonup$
omitted), and is called the $\Gamma$-\emph{crossed product} associated with $\rightharpoonup$, $\sigma$. 
\end{definition}

The following characterizes $\Gamma$-crossed products in (strongly) $\Gamma$-graded algebras. 

\begin{theorem}\label{thm:group_crossed_product}
For a $\Gamma$-graded algebra $A = \bigoplus_{g\in \Gamma} A_g$ with $B = A_1$, 
TFAE:
\begin{itemize}
\item[(a)] Each component $A_g$ contains an invertible element in $A$; 
\item[(b)] $A$ is strongly graded, and each component $A_g$ is isomorphic to $B$ as a left $B$-module;
\item[(c)] There is a $\Gamma$-graded algebra isomorphism $\alpha$ from $A$ to some 
$\Gamma$-crossed product
$B \rtimes_{\sigma} \Gamma$ such that $\alpha|_{B} =\mathrm{id}_B$,  the identity map on $B$. 
\end{itemize}
\end{theorem}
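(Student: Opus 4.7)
My plan is to prove the equivalences cyclically, (a) $\Rightarrow$ (b) $\Rightarrow$ (c) $\Rightarrow$ (a). The substance of the theorem sits in (b) $\Rightarrow$ (c), where the crossed-product data $(\rightharpoonup,\sigma)$ must be extracted from the graded algebra; the other two implications are short.

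For (a) $\Rightarrow$ (b) I would fix an invertible $u_g \in A_g$ for each $g$, decompose its inverse along the grading, and compare homogeneous components in $u_g u_g^{-1} = 1 \in A_1$ to force $u_g^{-1} \in A_{g^{-1}}$. Thus $1 \in A_g A_{g^{-1}}$, so Lemma~\ref{lem:strongly_graded}(2) makes $A$ strongly graded; moreover any $x \in A_g$ factors as $(xu_g^{-1})\, u_g$ with $xu_g^{-1} \in A_1 = B$, so right multiplication by $u_g$ supplies the required isomorphism $B \xrightarrow{\sim} A_g$. The implication (c) $\Rightarrow$ (a) is a direct check inside $B \rtimes_\sigma \Gamma$: the cocycle identities ensure that $\sigma(g^{-1},g)^{-1}$ times the basis vector over $g^{-1}$ is a two-sided inverse of the basis vector over $g$, and this can be transported along $\alpha^{-1}$ to give an invertible element of each $A_g$.

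For (b) $\Rightarrow$ (c) I would select left-$B$-module generators $u_g$ of $A_g$, normalized with $u_1 = 1$. The inclusions $u_g b \in A_g = Bu_g$ and $u_g u_h \in A_{gh} = Bu_{gh}$ define $g \rightharpoonup b$ and $\sigma(g,h)$ in $B$ via the equations $u_g b = (g \rightharpoonup b)u_g$ and $u_g u_h = \sigma(g,h)u_{gh}$. That $g \rightharpoonup$ is an algebra endomorphism, and that (G-data1)--(G-data3) hold, then drops out of $u_1 = 1$ together with the associativities $u_g(u_h b) = (u_g u_h)b$ and $u_g(u_h u_\ell) = (u_g u_h)u_\ell$. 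Provided $\sigma(g,h)$ really lies in $B^\times$, the left-$B$-linear graded bijection $\alpha \colon A \to B \rtimes_\sigma \Gamma$ sending $bu_g$ to $b$ times the canonical basis element of degree $g$ is multiplicative by construction and fixes $B$ because $u_1 = 1$.

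The one step I expect to require care is showing $\sigma(g,h) \in B^\times$, not merely $\sigma(g,h) \in B$. My approach will be to first establish, under hypothesis (b), that each $u_g$ admits a homogeneous two-sided inverse $u_g^{-1} \in A_{g^{-1}}$ inside $A$: the Morita context of Lemma~\ref{lem:strongly_graded}(1), which is strict because $A$ is strongly graded, supplies $v,w \in A_{g^{-1}}$ with $u_g v = 1 = w u_g$, and associativity forces $v = w$. Then $\sigma(g,h) = (u_g u_h)\, u_{gh}^{-1}$ is a product of invertibles of $A$, hence invertible in $A$; as it lies in $B$, it is invertible in $B$. After that, the remaining verifications are routine bookkeeping.
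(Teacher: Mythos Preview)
Your proof is correct and uses essentially the same ingredients as the paper: the Morita-context argument that a free $B$-basis element of $A_g$ is invertible, and the formulas $g\rightharpoonup b = u_g b\,u_g^{-1}$, $\sigma(g,h)=u_gu_hu_{gh}^{-1}$ for the crossed-product data. The only difference is organizational: the paper runs the cycle the other way, (c)$\Rightarrow$(b)$\Rightarrow$(a)$\Rightarrow$(c), so that the invertibility of the $u_g$ is established as the separate step (b)$\Rightarrow$(a), and the construction of $(\rightharpoonup,\sigma)$ in (a)$\Rightarrow$(c) can then start directly from invertible homogeneous elements---your (b)$\Rightarrow$(c) effectively concatenates these two steps.
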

\begin{proof}
(c) $\Rightarrow$ (b). Obvious. 

(b) $\Rightarrow$ (a). Assume (b), and let $v_g \in A_g$ be a left $B$-free basis element, $A_g =B v_g$. We
claim that $v_g$ is invertible in $A$. By Lemma \ref{lem:strongly_graded}(2), $\mu_{g^{-1}, g}$ and the map
\[ A_{g^{-1}} \to \operatorname{Hom}_{B-} (A_g, B), \ y \mapsto (x \mapsto xy) \]
are both bijective; see \cite[Lemma~25A.17(i), p.528]{R}. 
The first (resp., second) bijection shows that $v_g$ has a left (resp., right) inverse. 

(a) $\Rightarrow$ (c). Assume (a). For each $g\in \Gamma$, choose an invertible element $u_g \in A_g$,
so that $u_1 = 1$, the unit of $A$. Since it follows that $u_g^{-1} \in A_{g^{-1}}$, we have $A_gu_g^{-1} \subset B$,
whence $A_g = Bu_g$. 
 Define $g \rightharpoonup : B \to B$, $\sigma : \Gamma \times \Gamma \to B^{\times}$ by
\[ g\rightharpoonup b = u_gb\, u_g^{-1}\ (b\in B), \quad \sigma(g,h) = u_gu_hu_{gh}^{-1}. \]
Then one sees that these satisfy \eqref{G-data1}--\eqref{G-data3}, and $A = B\rtimes_{\sigma} \Gamma$. 
\end{proof}

\section{Hopf crossed products}\label{sec:Hopf}

Let $C$ be a coalgebra. The coproduct on $C$ is denoted by 
$$ \Delta : C \to C\otimes C,\ \Delta(c) = c_{(1)} \otimes c_{(2)} $$
(cf. \cite[p.10]{Sw}), and the counit is denoted by $\varepsilon : C \to k$. 
Let $A$ be an algebra. The vector space $\operatorname{Hom}(C, A)$ of all linear maps $C\to A$
form an algebra with respect to the product $*$ \cite[p.69]{Sw} defined by
\begin{equation}\label{*-product}
f * g (c) = f(c_{(1)})\, g(c_{(2)}), \quad c \in C, 
\end{equation}
where $f, g \in \operatorname{Hom}(C, A)$. The unit is given by $c \mapsto \varepsilon(c)1$. We say that $f$
is $*$-\emph{invertible} if it is invertible with respect to the product $*$. 

Recall that a \emph{bialgebra} is an algebra and coalgebra $H=(H, \Delta, \varepsilon)$ 
such that $\Delta : H \to H\otimes H$, $\varepsilon : H \to k$ are algebra maps.
A bialgebra $H$ is called a \emph{Hopf algebra} if 
the identity map $\mathrm{id}_H$ on $H$ is $*$-invertible. The inverse $S$ of $\mathrm{id}_H$ is called
the \emph{antipode}, and is characterized as a linear map $S : H \to H$ such that 
\[ h_{(1)}S(h_{(2)}) = \varepsilon(h)1 = S(h_{(1)})h_{(2)}, \quad h \in H. \]  

\begin{example}\label{ex:group_algebra1}
Let $\Gamma$ be a group. The group algebra $k\Gamma$ naturally forms a Hopf algebra with respect to the 
structure defined by
\[ \Delta(g) = g \otimes g, \quad \varepsilon(g) = 1, \quad S(g) = g^{-1}, \]
where $g \in \Gamma$.
\end{example}

In what follows, $H=(H, \Delta, \varepsilon)$ denotes a Hopf algebra. A right $H$-\emph{comodule algebra}
\cite[p.40]{Mon} is an algebra $A$ given an algebra map $\rho_A : A \to A \otimes H$ with 
which $A = (A, \rho_A)$ is a 
right $H$-comodule. The definition of \emph{left} $H$-\emph{comodule algebras} is obvious. 
By saying an $H$-comodule algebra we mean a right $H$-comodule algebra.
Let $A$ be an $H$-comodule algebra. Define
\begin{equation}\label{coinvariants}
A^{\mathrm{co}H} :=\{ a \in A \mid \rho_A(a) = a \otimes 1~\text{in}~A \otimes H \}. 
\end{equation}
This forms a subalgebra of $A$, and is called the \emph{subalgebra of} $H$-\emph{coinvariants} \cite[p.13]{Mon}. 
Set $B = A^{\mathrm{co}H}$. Then $A$ is called an $H$-\emph{Galois extension over}
$B$ \cite[p.123]{Mon}, if the (well-defined) left $A$-linear map
\[ \beta_A : A \otimes_B A \to A \otimes H,\ a \otimes_B a' \mapsto a\, \rho_A(a') \]
is bijective. 

\begin{example}\label{ex:group_algebra2}
A graded $\Gamma$-graded algebra is the same as a $k\Gamma$-comodule algebra. In fact, 
if $A = \bigoplus_{g \in \Gamma}A_g$ is a $\Gamma$-graded algebra, 
then it turns into a $k\Gamma$-comodule algebra
by defining $\rho_A(a) = a \otimes g$ for $a \in A_g$. 
Conversely, a $k\Gamma$-comodule algebra structure
$\rho_A$ on $A$ makes $A$ into a $\Gamma$-graded algebra with $A_g = \{ a \in A\mid \rho_A(a) =a \otimes g\}$. 
These constructions are inverses of each other. 
The neutral component $A_1$ coincides with $A^{\mathrm{co}k\Gamma}$. Recall from Lemma 
\ref{lem:strongly_graded}(b) that a $\Gamma$-algebra $A$ with neutral component $B$ is strongly graded
if and only if the product maps $A_g\otimes_B A_h \to A_{gh} (= A_{gh} \otimes h)$ are bijective for all $g, h \in
\Gamma$. Taking the direct sum $\bigoplus_{g,h \in \Gamma}$, we see that the last condition is equivalent
to that the map
\[ A \otimes_B A \to A \otimes k\Gamma, \quad 
\sum_g a_g \otimes \sum_h a'_h \mapsto \sum_{g,h} a_ga'_h \otimes h, \] 
where $a_g \in A_g$, $a'_h \in A_h$, 
is bijective. Since the last map coincides with $\beta_A$, it follows that a strongly graded $\Gamma$-algebra 
with neutral component $B$ is 
the same as a $k\Gamma$-Galois extension over $B$.
\end{example} 

Let us generalize the construction of group crossed products, replacing groups with Hopf algebras. 
Let $B$ be an algebra. A linear map $\rightharpoonup : H \otimes B \to B$ is called 
a \emph{measuring} \cite[p.137]{Sw}, if 
\[ h \rightharpoonup 1 = \varepsilon(h) 1\quad \text{and}\quad h \rightharpoonup bc  
= (h_{(1)} \rightharpoonup b)(h_{(2)} \rightharpoonup c),~~b, c \in B. \]
A pair $(\rightharpoonup, \sigma)$ of a measuring $\rightharpoonup : H \otimes B \to B$ and
a $*$-invertible linear map $\sigma : H \otimes H \to B$ is called a \emph{crossed system} 
(for $H$ over $B$) \cite[p.3055]{D},
if the following conditions are satisfied:   
\begin{equation}\label{H-data1}
1 \rightharpoonup b = b, \quad \sigma(h, 1) = \varepsilon(h)1 = \sigma(1, h), 
\end{equation}
\begin{equation}\label{H-data2}
[g_{(1)}\rightharpoonup(h_{(1)}\rightharpoonup b)]\, \sigma(g_{(2)}, h_{(2)}) 
= \sigma(g_{(1)}, h_{(1)})\, (g_{(2)}h_{(2)}\rightharpoonup b), 
\end{equation}
\begin{equation}\label{H-data3}
[g_{(1)}\rightharpoonup\sigma(h_{(1)}, \ell_{(1)})]\, \sigma(g_{(2)}, h_{(2)}\ell_{(2)})
= \sigma(g_{(1)}, h_{(1)})\, \sigma(g_{(2)}h_{(2)}, \ell ), 
\end{equation}
where $b \in B$, $g, h, \ell \in H$. One verifies directly the following.

\begin{lemma}\label{lem:H-crossed_product}
Given a crossed system $(\rightharpoonup, \sigma)$,
the right $H$-comodule $B \otimes H= (B \otimes H, \mathrm{id}_B \otimes \Delta)$ 
turns into an $H$-comodule algebra with respect to the product
defined by
\[ (b \otimes g)(c \otimes h) = b(g_{(1)}\rightharpoonup c)\, 
\sigma(g_{(2)}, h_{(1)}) \otimes g_{(3)}h_{(2)}, \]
where $b, c \in B$, $g, h\in H$. This $H$-comodule algebra has $1\otimes 1$ as its unit, and includes
$B = B\otimes k$ as the subalgebra of $H$-coinvariants.  
\end{lemma}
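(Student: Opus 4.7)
The plan is to verify four things in turn: that $1 \otimes 1$ is a two-sided unit, that the multiplication is associative, that the coaction $\mathrm{id}_B \otimes \Delta$ is an algebra map, and that the subalgebra of $H$-coinvariants is exactly $B \otimes k$. Each is a direct calculation from the definitions, and together they yield the lemma.

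For the unit and associativity: the equalities $(1 \otimes 1)(c \otimes h) = c \otimes h = (b \otimes g)(1 \otimes 1)$ reduce, via the product formula, to the normalizations \eqref{H-data1} together with the counit identities $\varepsilon(h_{(1)}) h_{(2)} = h = h_{(1)} \varepsilon(h_{(2)})$ and $\varepsilon(g_{(1)})g_{(2)} = g = g_{(1)}\varepsilon(g_{(2)})$. Associativity is the main computation. One expands both $((b \otimes g)(c \otimes h))(d \otimes \ell)$ and $(b \otimes g)((c \otimes h)(d \otimes \ell))$ using the product formula twice. To match them, one uses: the measuring property to rewrite expressions of the form $g_{(i)} \rightharpoonup \bigl(\sigma(h_{(j)}, \ell_{(k)}) (h_{(j')} \ell_{(k')} \rightharpoonup d)\bigr)$ as a product; the twist axiom \eqref{H-data2} to permute a $\sigma$-factor past a term of the form $g \rightharpoonup c$; and finally the cocycle axiom \eqref{H-data3} to collapse the resulting product of two $\sigma$-factors. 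This is a direct Sweedler-index analogue of the associativity proof for group crossed products in Lemma \ref{lem:group_crossed_product}.

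The comodule algebra property follows from coassociativity. Writing $\rho = \mathrm{id}_B \otimes \Delta$, the element $\rho((b \otimes g)(c \otimes h))$ equals $b(g_{(1)} \rightharpoonup c)\sigma(g_{(2)}, h_{(1)}) \otimes \Delta(g_{(3)} h_{(2)})$, while $\rho(b \otimes g)\, \rho(c \otimes h)$, computed componentwise in $(B \otimes H) \otimes H$, reduces to the same expression after applying coassociativity to both $g$ and $h$. Finally, the inclusion $B \otimes k \subseteq (B \otimes H)^{\mathrm{co}H}$ is immediate from $\Delta(1) = 1 \otimes 1$. For the reverse inclusion, any $x = \sum_i b_i \otimes h_i$ satisfying $\sum_i b_i \otimes h_{i,(1)} \otimes h_{i,(2)} = \sum_i b_i \otimes h_i \otimes 1$ yields, after applying $\mathrm{id}_B \otimes \varepsilon \otimes \mathrm{id}_H$, the identity $\sum_i b_i \otimes h_i = \sum_i b_i \varepsilon(h_i) \otimes 1$, placing $x$ in $B \otimes k$.

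The sole obstacle is the bookkeeping in the associativity calculation, where coassociativity must be applied to $g$ several times to supply enough Sweedler components for the measuring, the twist axiom \eqref{H-data2}, and the cocycle axiom \eqref{H-data3} to be invoked in the correct order. Once the indexing is arranged this is a purely formal manipulation, with no conceptual novelty beyond the group case treated in Lemma \ref{lem:group_crossed_product}.
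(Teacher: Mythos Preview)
Your proposal is correct and matches the paper's intent: the paper gives no proof beyond the remark ``One verifies directly the following,'' and what you have written is exactly that direct verification, organized along the same lines as the group case in Lemma~\ref{lem:group_crossed_product}. The only cosmetic slip is the chained equality $(1\otimes 1)(c\otimes h)=c\otimes h=(b\otimes g)(1\otimes 1)$, which literally asserts $c\otimes h=b\otimes g$; write these as two separate identities.
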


\begin{definition}\label{def:H-crossed_product}
The $H$-comodule algebra constructed above is denoted by $B \rtimes_{\sigma} H$ (with $\rightharpoonup$
omitted), and is called the $H$-\emph{crossed product} associated with $(\rightharpoonup, \sigma)$. 
\end{definition}

\begin{rem}\label{rem:smash}
Suppose that a measuring $\rightharpoonup : H \otimes B \to B$ is a left $H$-module structure on $B$, 
or in other words that $B$ is a left $H$-\emph{module algebra} \cite[p.153]{Sw}
under $\rightharpoonup$. Then this 
$\rightharpoonup$ together with the trivial $\sigma : H \otimes H \to B$, $\sigma(g, h) = \varepsilon(g)
\varepsilon(h)$ form a crossed system. In this case, $B \rtimes_{\sigma} H$ 
is denoted by $B \rtimes H$, and is called the $H$-\emph{smash product} \cite[p.155]{Sw} associated with
$\rightharpoonup$. The product on $B \rtimes H$ is given by
\[ (b \otimes g)(c \otimes h) = b(g_{(1)}\rightharpoonup c) \otimes g_{(2)}h. \]
\end{rem} 

\begin{example}\label{ex:group_algebra3}
Suppose that $H$ is a group algebra $k\Gamma$. Giving a measuring $k\Gamma \otimes B \to B$
is the same as giving an algebra endomorphism $g \rightharpoonup : B \to B$ for each $g \in \Gamma$. 
Giving an invertible linear map $k\Gamma \otimes k\Gamma \to B$ is the same as giving an element
$\sigma(g, h)$ in $B^{\times}$ for each $g, h \in H$. One sees, moreover, that the $k\Gamma$-crossed 
products $B \rtimes_{\sigma} k\Gamma$ constructed above coincide with the $\Gamma$-crossed 
products $B \rtimes_{\sigma} \Gamma$ constructed in the preceding section. 
\end{example}

\begin{definition}\label{def:cleft}
An $H$-comodule algebra $A$ is said to be \emph{cleft} if there is a $*$-invertible
$H$-comodule map $\phi : H \to A$. We remark that if this is the case, $\phi$ can be chosen so that
$\phi(1) = 1$, by replacing $\phi$ with 
\begin{equation}\label{replacement}
h \mapsto \phi^{-1}(1)\, \phi(h). 
\end{equation}
An invertible $H$-comodule map $\phi: H \to A$ such that $\phi(1) = 1$ is called a \emph{section}. 
\end{definition}

\begin{theorem}[\cite{D}, \cite{DT}]\label{thm:DT} 
For an $H$-comodule algebra $A$ with $B = A^{\mathrm{co}H}$, 
TFAE:
\begin{itemize}
\item[(a)] $A$ is cleft; 
\item[(b)] $A$ is an $H$-Galois extension over $B$, and there is a left $B$-linear and right $H$-colinear
isomorphism $A \simeq B \otimes H$; 
\item[(c)] There is an $H$-comodule algebra isomorphism $\alpha$ from $A$ to some $H$-crossed product
$B \rtimes_{\sigma} H$ such that $\alpha|_{B} =\mathrm{id}_B$. 
\end{itemize}
\end{theorem}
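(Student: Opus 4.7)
The plan is to prove the four implications (a) $\Rightarrow$ (c), (c) $\Rightarrow$ (a), (a) $\Rightarrow$ (b), (b) $\Rightarrow$ (a), following the pattern of Theorem~\ref{thm:group_crossed_product} but with a section $\phi : H \to A$ (and its $*$-inverse $\phi^{-1}$) playing the roles of the invertible elements $u_g \in A_g$ (and $u_g^{-1}$). A preliminary lemma I will establish is the twisted colinearity of $\phi^{-1}$, for any right $H$-comodule map $\phi : H \to A$ admitting a $*$-inverse:
\[
\rho_A(\phi^{-1}(h)) = \phi^{-1}(h_{(2)}) \otimes S(h_{(1)}).
\]
This follows by checking that the right-hand side, viewed as a map $H \to A \otimes H$, is the convolution inverse of $\rho_A \circ \phi$ in $\operatorname{Hom}(H, A \otimes H)$, a direct Sweedler computation using the antipode axiom. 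I will use this identity repeatedly to show that certain expressions involving $\phi$ and $\phi^{-1}$ land in $B = A^{\mathrm{co}H}$.

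For (a) $\Rightarrow$ (c), after normalizing so that $\phi(1) = 1$ via \eqref{replacement}, I define
\[
h \rightharpoonup b := \phi(h_{(1)}) b \phi^{-1}(h_{(2)}), \qquad
\sigma(g, h) := \phi(g_{(1)}) \phi(h_{(1)}) \phi^{-1}(g_{(2)} h_{(2)}),
\]
with candidate $*$-inverse $\sigma^{-1}(g, h) := \phi(g_{(1)} h_{(1)}) \phi^{-1}(h_{(2)}) \phi^{-1}(g_{(2)})$. The twisted colinearity identity and the antipode axiom $h_{(1)} S(h_{(2)}) = \varepsilon(h) 1$ show both $\rightharpoonup$ and $\sigma$ take values in $B$, and \eqref{H-data1}--\eqref{H-data3} follow by associativity of $A$ together with $\phi * \phi^{-1} = \eta\varepsilon$. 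The map $\alpha : B \rtimes_\sigma H \to A$, $b \otimes h \mapsto b\phi(h)$, is $B$-linear, $H$-colinear, restricts to $\mathrm{id}_B$, and has inverse $a \mapsto a_{(0)} \phi^{-1}(a_{(1)}) \otimes a_{(2)}$ (whose first tensorand lies in $B$ by the colinearity identity). Multiplicativity is verified by expanding $\alpha((b \otimes g)(c \otimes h))$ in iterated coproducts and telescoping the adjacent $\phi^{-1}\phi$ pairs via $\phi^{-1} * \phi = \eta\varepsilon$. Conversely, for (c) $\Rightarrow$ (a), $\phi(h) := 1 \otimes h$ in $B \rtimes_\sigma H$ is a right $H$-colinear section with $\phi(1) = 1$, and its $*$-inverse can be written down explicitly in terms of $\sigma^{-1}$ and $S$.

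For (a) $\Rightarrow$ (b), using the section $\phi$, the map $\Phi : B \otimes H \to A$, $b \otimes h \mapsto b\phi(h)$, is left $B$-linear and right $H$-colinear with two-sided inverse $a \mapsto a_{(0)} \phi^{-1}(a_{(1)}) \otimes a_{(2)}$, and the Galois map has inverse $a \otimes h \mapsto a\phi^{-1}(h_{(1)}) \otimes_B \phi(h_{(2)})$, both verified using the convolution identities and the containment $a_{(0)} \phi^{-1}(a_{(1)}) \in B$. For (b) $\Rightarrow$ (a), given $\Phi$, set $\phi(h) := \Phi(1 \otimes h)$; transporting the Galois map along $\Phi$ yields the bijective, left $A$-linear, right $H$-colinear map
\[
\gamma : A \otimes H \to A \otimes H, \qquad a \otimes h \mapsto a\phi(h_{(1)}) \otimes h_{(2)}.
\]
Every left $A$-linear, right $H$-colinear self-map of $A \otimes H$ has the form $a \otimes h \mapsto a\psi(h_{(1)}) \otimes h_{(2)}$ for a uniquely determined $\psi : H \to A$ (recovered as $\psi(h) = (1 \otimes \varepsilon)(f(1 \otimes h))$), so $\gamma^{-1}$ has this form for some $\psi$; applying $\gamma \gamma^{-1} = \gamma^{-1} \gamma = \mathrm{id}$ to $1 \otimes h$ and projecting by $1 \otimes \varepsilon$ gives $\phi * \psi = \psi * \phi = \eta\varepsilon$, whence $\psi = \phi^{-1}$ and $A$ is cleft.

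The main obstacle I expect is the Sweedler-notation bookkeeping in (a) $\Rightarrow$ (c): verifying \eqref{H-data2}, \eqref{H-data3}, and the multiplicativity of $\alpha$ requires manipulating iterated four- or five-fold coproducts and telescoping the correct $\phi^{-1}\phi$ adjacencies via $\phi^{-1} * \phi = \eta\varepsilon$, combined with several applications of the antipode axiom. Once the twisted-colinearity formula and a consistent indexing convention are fixed, these verifications are mechanical but lengthy, and they are where the structural content of the crossed-system axioms is actually extracted.
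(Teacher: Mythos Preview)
Your proposal is correct and follows essentially the same route as the paper for (a)~$\Rightarrow$~(b) and (a)~$\Leftrightarrow$~(c): the paper uses exactly your formulas for $\rightharpoonup$, $\sigma$, the inverse of $\beta_A$, and the mutually inverse maps between $A$ and $B\otimes H$, and defers the verification of \eqref{H-data1}--\eqref{H-data3} to the literature, whereas you sketch it directly via the twisted-colinearity identity for $\phi^{-1}$ (which the paper does not isolate but implicitly uses).

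The one place your argument genuinely differs is (b)~$\Rightarrow$~(a). The paper argues abstractly: it sets up injections $\xi,\eta$ from $\operatorname{Hom}_{B-}^{-H}(B\otimes H,A)$ and $\operatorname{Hom}_{B-}^{-H}(A,B\otimes H)$ into $\operatorname{Hom}(H,A)$ and shows they carry composition of mutually inverse isomorphisms to convolution of mutually $*$-inverse maps. Your version is more concrete: you transport $\beta_A$ along $\Phi$ to obtain the bijective left $A$-linear, right $H$-colinear endomorphism $\gamma$ of $A\otimes H$, observe that every such endomorphism has the form $a\otimes h\mapsto a\,\psi(h_{(1)})\otimes h_{(2)}$, and read off $\phi^{-1}$ from $\gamma^{-1}$. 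The two arguments encode the same mechanism (composition in the endomorphism monoid of $A\otimes H$ corresponds to convolution in $\operatorname{Hom}(H,A)$), but yours makes the passage from bijectivity to $*$-invertibility more transparent and avoids the bookkeeping with the auxiliary maps $\xi,\eta$.
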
 

\begin{rem}\label{rem:compare_with_graded_algebra}
Let $A$ be a $\Gamma$-graded algebra with $B = A_1$, or in other words, a $k\Gamma$-comodule algebra
with $B = A^{\mathrm{co}k\Gamma}$. One sees
that $A$ is cleft if and only if each component $A_g$ contains a unit in $A$, and that 
there is a left $B$-linear and right $k\Gamma$-colinear isomorphism $A \simeq B \otimes H$ if
and only if each component $A_g$ is isomorphic to $B$
 as a left $B$-module. This together with Examples \ref{ex:group_algebra2}, \ref{ex:group_algebra3}
conclude that Theorem \ref{thm:DT} generalizes Theorem \ref{thm:group_crossed_product}. 
\end{rem}

\begin{proof}[Sketchy Proof of Theorem \ref{thm:DT}] 
(a) $\Rightarrow$ (b). Let $\phi: H \to A$ be a section. Then one sees that
\[ A \otimes H \to A \otimes_B A, \ a \otimes h \mapsto a\, \phi^{-1}(h_{(1)}) \otimes_B \phi(h_{(2)}) \]
gives an inverse of $\beta_A$, whence $A$ is an $H$-Galois extension over $B$. One also sees that
\[ B \otimes H \to A,\ b \otimes h \mapsto b\, \phi(h) \]
is left $B$-linear and right $H$-colinear map, and is indeed an isomorphism with inverse
\[ A \to B \otimes H,\ a \mapsto a_{(0)}\phi^{-1}(a_{(1)}) \otimes a_{(2)}, \]
where
$a_{(0)}\otimes a_{(1)}\otimes a_{(2)}= (\rho_A\otimes \mathrm{id}_H)\circ \rho_A(a) 
(= (\mathrm{id}_A\otimes \Delta)\circ \rho_A(a))$.   

(b) $\Rightarrow$ (a). Assume (b). 
Let $\operatorname{Hom}_{B-}^{-H}$ (resp., $\operatorname{End}_{B-}^{-H}$)
denote the vector space (resp., the algebra) of left $B$-linear and right 
$H$-colinear maps (resp., endomorphisms). Let  
\[ \xi : \operatorname{Hom}_{B-}^{-H}(B \otimes H, A) \to \operatorname{Hom}(H, A), \]
be the linear map given by $f \mapsto (h \mapsto f(1 \otimes h))$.  
Let 
\[ \eta : \operatorname{Hom}_{B-}^{-H}(A, B \otimes H) \to \operatorname{Hom}(H, A) \]
be the composite of the linear map $\operatorname{Hom}_{B-}^{-H}(A, B \otimes H) \to
\operatorname{End}_{B-}(A)$ given by $g \mapsto (a \mapsto (\mathrm{id}_B \otimes \varepsilon) \circ g(a))$, 
with the isomorphism $\operatorname{Hom}_{B-}^{-H}(\beta_A^{-1}, A) :
\operatorname{End}_{B-}^{-H}(A)\overset{\simeq}{\longrightarrow} \operatorname{Hom}^{-H}(H, A)\,
(\subset \operatorname{Hom}(H, A))$. 
We see that $\xi$, $\eta$ are both injective. We claim that $\xi$ restricts to an injection
\[ \operatorname{Isom}_{B-}^{-H}(A, B \otimes H) \to \operatorname{Reg}(H, A) \]
from the set of all left B-linear and right $H$-colinear isomorphisms $A \overset{\simeq}{\longrightarrow}
B \otimes H$ to the set of all $*$-invertible linear maps $H \to A$; this will immediately imply (a). 
To prove the claim, suppose $\xi(f) = F$, $\eta(g) = G$. We have the linear injection
\[ \operatorname{End}_{B-}^{-H}(B \otimes H) \to \operatorname{Hom}(H, A), \ p \mapsto 
[h \mapsto (\mathrm{id}_B \otimes \varepsilon)\circ p(1\otimes h))].\]
One sees that this injection preserves the unit, and sends the composite $g\circ f$ to $F * G$. 
Let 
\[ \operatorname{End}_{B-}^{-H}(A) \to \operatorname{Hom}(H, A) \]
be the linear injection obtained by composing the inclusion $\operatorname{End}_{B-}^{-H}(A)\hookrightarrow
\operatorname{End}_{B-}(A)$ with $\operatorname{Hom}_{B-}^{-H}(\beta_A^{-1}, A)$. One sees that 
this last injection preserves the unit, and sends $f\circ g$ to $G * F$. It follows that if $f$ and $g$ are
inverses of each other, then $F$ and $G$ are $*$-inverses of each other. This proves the claim. 

(a) $\Leftrightarrow$ (c). In general, a \emph{cleft system} (for a Hopf algebra $H$ over an algebra $B$) 
\cite[p.4539]{M} 
is a pair $(A, \phi)$ of an $H$-comodule algebra $A$ with $B = A^{\mathrm{co}H}$, together with a
section $\phi : H \to A$. Two such systems $(A_i, \phi_i)$, $i = 1,2$, are said to be isomorphic if there
exists an $H$-comodule algebra isomorphism $\zeta : A_1 \overset{\simeq}{\longrightarrow} A_2$
such that $\zeta|_B = \mathrm{id}_B$ and $\zeta \circ \phi_1 = \phi_2$.  (It is known that an $H$-comodule
algebra map $\zeta : A_1 \to A_2$ such that $\zeta|_B = \mathrm{id}_B$ is necessarily bijective; see 
\cite[Lemma 1.3]{M}, for example.)

There is a one-to-one correspondence
between the set of all isomorphic classes of the cleft systems $(A, \phi)$ and the set of all crossed systems 
$(\rightharpoonup, \sigma)$, both for $H$ over $B$; see \cite[Proposition 1.4]{M}. 
In fact, given $(A, \phi)$, the corresponding $(\rightharpoonup, \sigma)$ is given by 
\begin{equation}\label{construct_H-data}
\begin{aligned}
h \rightharpoonup b &= \phi(h_{(1)}) b\, \phi^{-1}(h_{(2)}),\\
\sigma(g,h) &= \phi(g_{(1)})\phi(h_{(1)})\phi^{-1}(g_{(2)}h_{(2)}),  
\end{aligned} 
\end{equation}
where $b\in B$, $g, h \in H$.
On the other hand, given $(\rightharpoonup, \sigma)$, the corresponding $(A, \phi)$ is given by
\[ A = B\rtimes_{\sigma} H, \quad \phi = 1 \otimes - : H \to B\rtimes_{\sigma} H,\ h \mapsto 1 \otimes h. \]
One sees that the cleft system thus given corresponds to the original $(\rightharpoonup, \sigma)$. 
One the other hand, if $(A, \phi) \mapsto (\rightharpoonup, \sigma)$, then 
$(B\rtimes_{\sigma} H, 1 \otimes -)$ is isomorphic to the original $(A, \phi)$ via
\[ B \rtimes_{\sigma} H \overset{\simeq}{\longrightarrow} A,\ b \otimes h \mapsto b\, \phi(h). \] 
The one-to-one correspondence thus shown proves the desired equivalence. 
\end{proof}

\begin{prop}\label{prop:smash_product} 
For an $H$-comodule algebra $A$ with $B = A^{\mathrm{co}H}$, 
TFAE:
\begin{itemize}
\item[(a)] There is an H-comodule algebra map $H \to A$; 
\item[(b)] There is an $H$-comodule algebra isomorphism $\alpha$ from $A$ to some $H$-smash product
$B \rtimes H$ such that $\alpha|_{B} = \mathrm{id}_B$. 
\end{itemize}
\end{prop}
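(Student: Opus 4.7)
The plan is to reduce the proposition to Theorem \ref{thm:DT} by observing that condition (a) is precisely the specialization of cleftness in which the section $\phi$ can be chosen to be an algebra map, and that under this specialization the crossed system produced by \eqref{construct_H-data} has trivial cocycle.

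For (b) $\Rightarrow$ (a), I would verify that in any smash product $B \rtimes H$ the map $\iota : H \to B \rtimes H$, $h \mapsto 1 \otimes h$, is an $H$-comodule algebra map. Multiplicativity is immediate from $g_{(1)} \rightharpoonup 1 = \varepsilon(g_{(1)})1$, which reduces $(1\otimes g)(1\otimes h)$ to $1 \otimes gh$; the $H$-colinearity and unitality are built into the definition of $B \rtimes H$. Composing $\iota$ with $\alpha^{-1}$ then produces the required $H$-comodule algebra map $H \to A$.

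For (a) $\Rightarrow$ (b), given an $H$-comodule algebra map $\phi : H \to A$, I would first observe that $\phi(1) = 1$ and that $\phi$ is $*$-invertible with inverse $\phi \circ S$: since $\phi$ is an algebra map,
\[ (\phi * (\phi \circ S))(h) = \phi(h_{(1)})\, \phi(S(h_{(2)})) = \phi(h_{(1)}S(h_{(2)})) = \varepsilon(h)1, \]
and the other side is analogous. Thus $\phi$ is a section in the sense of Definition \ref{def:cleft}, so $A$ is cleft, and Theorem \ref{thm:DT} supplies an $H$-comodule algebra isomorphism $\alpha : A \to B \rtimes_\sigma H$ with $\alpha|_B = \mathrm{id}_B$, where $(\rightharpoonup, \sigma)$ is the crossed system produced by formula \eqref{construct_H-data}.

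The key step, on which the whole reduction hinges, is then to check that this $\sigma$ is trivial. Substituting $\phi^{-1} = \phi \circ S$ and using multiplicativity of $\phi$, I compute
\[ \sigma(g,h) = \phi(g_{(1)})\phi(h_{(1)})\phi^{-1}(g_{(2)}h_{(2)}) = \phi(g_{(1)}h_{(1)})\, (\phi \circ S)(g_{(2)}h_{(2)}) = \varepsilon(gh)1 = \varepsilon(g)\varepsilon(h)1. \]
Condition \eqref{H-data2} then collapses to $g \rightharpoonup (h \rightharpoonup b) = gh \rightharpoonup b$, so $\rightharpoonup$ automatically becomes a left $H$-module algebra structure on $B$, and $B \rtimes_\sigma H$ coincides with the smash product $B \rtimes H$ described in Remark \ref{rem:smash}. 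I do not anticipate any real obstacle: the nontrivial content has already been packaged in Theorem \ref{thm:DT}, and the present proposition is essentially the remark that a cleft system whose section is an algebra map is the same as a smash product.
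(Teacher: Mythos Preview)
Your proposal is correct and follows essentially the same line as the paper's proof: the paper simply observes that under the bijection between cleft systems $(A,\phi)$ and crossed systems $(\rightharpoonup,\sigma)$ established in the proof of Theorem~\ref{thm:DT}, the section $\phi$ is an $H$-comodule algebra map if and only if $\sigma$ is trivial. You have unpacked both directions of this ``if and only if'' explicitly---computing $\sigma = \varepsilon\otimes\varepsilon$ from multiplicativity of $\phi$, and conversely checking that $h\mapsto 1\otimes h$ is multiplicative when $\sigma$ is trivial---which is exactly the content the paper leaves to the reader.
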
 
\begin{proof} 
One sees that if $(A, \phi)\leftrightarrow (\rightharpoonup, \sigma)$ under the one-to-one
correspondence given in the last proof, then $\phi$ is an $H$-comodule algebra map
if and only if $\sigma$ is trivial, i.e., $\sigma = \varepsilon \otimes \varepsilon$. 
This proves the desired equivalence. 
\end{proof} 

Given a Hopf algebra $H$, classify all cleft $H$-comodule algebras, or more generally, all $H$-Galois
extensions. Many authors already answered this question for various $H$; see \cite{M}, for example.

\section{Application~I---equivariant smoothness}\label{sec:smooth}

Very roughly speaking, spaces correspond to commutative algebras. Grothendieck formulated
the smoothness of spaces in terms of the corresponding commutative algebras. The definition,
with the commutativity removed, is as follows:
an algebra $R$ is said to be \emph{smooth}, if any algebra surjection  
onto $R$ with nilpotent kernel splits; see \cite[p.314]{Weibel}, for example. By using Hopf crossed products,
we prove a theorem 
which characterizes the
\emph{equivariant smoothness} of Hopf algebras. For an alternative, categorical approach, see \cite{AMS}. 

\begin{theorem}\label{thm:equivariant_smoothness}
For a Hopf algebra $H$, TFAE:
\begin{itemize}
\item[(a)] $H$ is hereditary as an algebra, or in other words, $\operatorname{gl.dim} H \le 1$, 
where $\operatorname{gl.dim} H$ denotes the global dimension of the algebra $H$;
\item[(b)] Given a surjection $C \twoheadrightarrow D$ of right (or left) $H$-comodule algebras with nilpotent kernel, 
every $H$-comodule algebra map $H \to D$ can lift to such a map $H \to C$; 
\item[(c)] Every surjection $A \twoheadrightarrow H$ of right (or left) $H$-comodule algebras onto $H$ with nilpotent
kernel splits. 
\end{itemize}
\end{theorem}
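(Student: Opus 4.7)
The easy implications (b) $\Leftrightarrow$ (c) come first. (b) $\Rightarrow$ (c) is the specialization $D = H$, $\phi = \mathrm{id}_H$. For (c) $\Rightarrow$ (b), I would form the pullback $A := C \times_D H$ in the category of $H$-comodule algebras; the colinearity of $\pi$ and $\phi$ ensures that $\rho_C$ and $\rho_H$ are compatible after projection to $D \otimes H$, so the pullback inherits a natural $H$-coaction. Since $\ker(A \twoheadrightarrow H) \cong \ker \pi$ is nilpotent, (c) provides a section $s \colon H \to A$; composing with the other projection $A \to C$ lifts $\phi$.

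For the heart of the argument, (a) $\Rightarrow$ (b), I would induct along the tower $C = C/I^n \twoheadrightarrow C/I^{n-1} \twoheadrightarrow \cdots \twoheadrightarrow C/I = D$ (each consecutive kernel $I^{k-1}/I^k$ being square-zero as an ideal) to reduce to the case $I^2 = 0$. Then Proposition \ref{prop:smash_product} identifies $D \simeq D^{\mathrm{co}H} \rtimes H$ via $\phi$, and my plan is two-step: first show $C$ is itself cleft, then modify the cleft structure so that $\phi$ lifts to an algebra map. For cleftness: because $I^2 = 0$, the ideal $I$ acquires a right $H$-module structure (via $\phi$ and the $C$-action on $I$, which factors through $D$) that, together with its inherited $H$-coaction, makes $I$ a right Hopf module. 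By the fundamental theorem of Hopf modules, $I \simeq I^{\mathrm{co}H} \otimes H$, so $I$ is cofree and hence injective as a right $H$-comodule; this kills the $\operatorname{Ext}^1$ obstruction to lifting $\phi$ to a right $H$-colinear map $\psi_0 \colon H \to C$. After normalising to be unital (possible because $\psi_0(1) \in C^{\mathrm{co}H}$ is a unit modulo the nilpotent $I$), this produces a unital $H$-colinear lift $\psi \colon H \to C$ of $\phi$; its $*$-invertibility follows from that of $\phi = \pi \circ \psi$ together with the nilpotency of $I$, via a Hensel-style correction. Theorem \ref{thm:DT} then supplies $C \simeq E \rtimes_\tau H$ with $E := C^{\mathrm{co}H}$.

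It remains to turn $\psi$ into an algebra map. Any modification $\psi' := u * \psi$ with $u \colon H \to E$ unital and $*$-invertible preserves unitality, right $H$-colinearity, and $*$-invertibility; the condition that $\psi'$ be an algebra map translates into a gauge equation between $u$ and $\tau$. Because $\phi = \pi \circ \psi$ is already an algebra map, the defect $\tau - \varepsilon \otimes \varepsilon \colon H \otimes H \to E$ takes values in $J := E \cap I$, which satisfies $J^2 = 0$. Writing $u = 1 + \delta$ with $\delta \colon H \to J$, the nonlinear gauge equation linearises to a classical Hochschild 2-cocycle equation for $\delta$ with coefficients in the $H$-bimodule $J$. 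Hereditariness of $H$ forces the vanishing of the relevant Hochschild cohomology (via the standard bound of Hochschild dimension by global dimension for Hopf algebras), so the required $\delta$ exists and $\psi' = u * \psi$ is the algebra map lifting $\phi$.

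For (c) $\Rightarrow$ (a), I would test (c) against universal square-zero $H$-comodule algebra extensions of $H$: an appropriate $H$-Hopf bimodule $M$ together with a Hochschild-type 2-cocycle $\omega$ produces a square-zero $H$-comodule algebra extension of $H$ by $M$, and (c) forces a splitting, hence $[\omega] = 0$; the resulting cohomology vanishing translates back into $\operatorname{gl.dim} H \le 1$. The chief obstacle in the plan is the cleftness upgrade, together with the identification of the remaining cleftness obstruction as a classical Hochschild 2-class—this is precisely where the crossed-product machinery of \S\ref{sec:Hopf} earns its keep, converting a non-abelian lifting problem into a linear cohomological one.
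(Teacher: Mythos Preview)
Your proposal is correct and follows essentially the same strategy as the paper: the Hopf module theorem gives the $H$-colinear splitting (hence cleftness), the algebra-map obstruction is identified with a Hochschild $2$-class in ${}_{\triangleright}J_{\mathrm{triv}}$ which vanishes under~(a), and the pullback handles the passage between (b) and (c). The only organizational difference is that the paper first uses the pullback to reduce to $D=H$ (so that $E^{+}=J$ and Lemma~\ref{2-cocycle}/Proposition~\ref{prop:bijection} apply verbatim), whereas you run the Hochschild argument directly for general $D$, working with $J=E\cap I$ inside the possibly larger $E=C^{\mathrm{co}H}$; both routes yield the same linearized coboundary equation.
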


\begin{rem}\label{rem:gldim}
Let $H$ be a Hopf algebra. When we regard a vector space $V$ as a trivial left or right $H$-module
through $\varepsilon$, we write as ${}_{\mathrm{triv}}V$ or $V_{\mathrm{triv}}$. If $0 \leftarrow k
\leftarrow P_0 \leftarrow P_1 \leftarrow \dots$ is a projective resolution of the trivial $H$-module 
$k= {}_{\mathrm{triv}}k$, 
then $0 \leftarrow M = k \otimes M \leftarrow P_0\otimes M \leftarrow P_1 \otimes M \leftarrow \dots$
gives a projective resolution of any left $H$-module $M$, where the $H\otimes H$-modules $P_i \otimes M$
are regarded as $H$-modules through $\Delta : H \to H\otimes H$.
(To see this, note from the proof of \cite[Theorem 3.1.3, p.29]{Mon} that $F \otimes M$ is $H$-free, if $F$ is
a free left $H$-module.)   
It follows that $\operatorname{gl.dim} H$ coincides
with the projective dimension of $k$. Since there is an isomorphism between the standard complex for
computing $\operatorname{Ext}^{\bullet}_H(k, M)$ and the standard complex for computing the Hochschild
cohomology $HH^{\bullet}(H, M_{\mathrm{triv}})$ (see \cite[Lemma 9.1.9, p.306]{Weibel}), 
Condition (a) above is equivalent to
\begin{itemize}
\item[(a$'$)] $HH^2(H, M_{\mathrm{triv}}) = 0$ for all left $H$-modules $M$, 
\end{itemize}
where $M$ is regarded as an $H$-bimodule by the original left $H$-module structure and the
trivial right $H$-module structure. 
\end{rem}

To prove the theorem we need some preparations. In what follows, $H$ denotes a Hopf algebra.
Let $A$ be a \emph{cleft} $H$-comodule
algebra. Suppose that $A$ is augmented, i.e., is given a specific algebra map, 
$\varepsilon_A : A \to k$. 

\begin{lemma}\label{lem:augmented_section}
A section $\phi : H \to A$ can be chosen to be augmented, i.e., so as $\varepsilon = \phi\circ \varepsilon_A$.
\end{lemma}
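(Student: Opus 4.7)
The plan is to modify the given section $\phi : H \to A$ by left convolution with a scalar-valued functional so as to force $\varepsilon_A\circ \psi = \varepsilon$ while preserving the three defining properties of a section. The key observation is that since $\varepsilon_A : A \to k$ is an algebra map and $\phi$ is $*$-invertible in $\operatorname{Hom}(H,A)$, its composite $u := \varepsilon_A \circ \phi$ is $*$-invertible in the convolution algebra $H^* = \operatorname{Hom}(H,k)$, with $*$-inverse $\bar u := \varepsilon_A\circ \phi^{-1}$ (apply $\varepsilon_A$ to $\phi * \phi^{-1} = \varepsilon(-)1$).

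With this in hand, I would define
\[ \psi(h) := \bar u(h_{(1)})\, \phi(h_{(2)}), \qquad h \in H, \]
viewing $\bar u(h_{(1)})$ as a central scalar in $A$. Applying $\varepsilon_A$ and using that it is an algebra map yields $\varepsilon_A(\psi(h)) = \bar u(h_{(1)})\, u(h_{(2)}) = (\bar u * u)(h) = \varepsilon(h)$, which is exactly the desired normalization.

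Next I would check that $\psi$ is still a section. Unitality $\psi(1) = 1$ follows because $\phi(1) = 1$ forces $\phi^{-1}(1) = 1$ (evaluate $\phi * \phi^{-1}$ at $1$), hence $\bar u(1) = \varepsilon_A(1) = 1$. Right $H$-colinearity is an immediate coassociativity check: $\rho_A(\psi(h)) = \bar u(h_{(1)}) \phi(h_{(2)}) \otimes h_{(3)} = \psi(h_{(1)}) \otimes h_{(2)}$, using that $\phi$ is colinear and $\bar u$ is scalar-valued. Finally, the natural candidate for the $*$-inverse of $\psi$ is $h \mapsto \phi^{-1}(h_{(1)})\, u(h_{(2)})$, and a short telescoping computation using $\phi * \phi^{-1} = \varepsilon(-)1$ together with $u * \bar u = \varepsilon$ shows that this is a two-sided $*$-inverse.

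No step is genuinely subtle; the main conceptual point is that the set of sections forms a torsor under convolution by $*$-invertible scalar functionals in $H^*$, so one can always translate a given section to normalize its composite with any fixed algebra augmentation. The only thing to watch during the routine verifications is keeping the two convolution products (in $\operatorname{Hom}(H,A)$ and in $H^*$) straight, which is painless because scalar values commute through everything.
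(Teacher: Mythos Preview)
Your proof is correct and takes exactly the same approach as the paper: the paper's one-line proof simply says to replace $\phi$ by $h \mapsto \varepsilon_A(\phi^{-1}(h_{(1)}))\,\phi(h_{(2)})$, which is precisely your $\psi(h) = \bar u(h_{(1)})\,\phi(h_{(2)})$ with $\bar u = \varepsilon_A\circ\phi^{-1}$. You have merely spelled out the routine verifications that the paper leaves to the reader.
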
 
\begin{proof}
Replace an arbitrary section $\phi$ with $h \mapsto \varepsilon_A(\phi^{-1}(h_{(1)}))\phi(h_{(2)})$. 
\end{proof} 

\begin{lemma}\label{lem:augmented_data}
Let $A=(A, \varepsilon_A)$ be as above. Set $B = A^{\mathrm{co}H}$. Suppose that $\phi : H \to A$
be an augmented section. Then the corresponding crossed system 
\[ (\rightharpoonup \, : H \otimes B \to B,\ \sigma : H \otimes H \to B) \]
given by \eqref{construct_H-data} is augmented in the sense that 
\[ \varepsilon \otimes \varepsilon_A = 
\varepsilon_A\circ \rightharpoonup, \quad
\varepsilon \otimes \varepsilon = \varepsilon_A\circ \sigma. \]
\end{lemma}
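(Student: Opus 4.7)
The plan is to reduce both identities to a purely formal calculation, once one knows that the convolution inverse $\phi^{-1}$ is also augmented, i.e., $\varepsilon_A\circ \phi^{-1} = \varepsilon$.

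First I would establish this auxiliary fact. Applying the algebra map $\varepsilon_A$ to the defining identity $\phi(h_{(1)})\,\phi^{-1}(h_{(2)}) = \varepsilon(h)\,1$ and using that $\varepsilon_A$ is multiplicative together with the hypothesis $\varepsilon_A\circ \phi = \varepsilon$, one gets
\[
\varepsilon(h_{(1)})\,\varepsilon_A(\phi^{-1}(h_{(2)})) = \varepsilon(h),
\]
which by counitality collapses to $\varepsilon_A(\phi^{-1}(h)) = \varepsilon(h)$, as desired. (Equivalently, the constant $\varepsilon$ is the $*$-inverse of $\varepsilon = \varepsilon_A\circ\phi$ in $\operatorname{Hom}(H,k)$, and $\varepsilon_A\circ\phi^{-1}$ is also a $*$-inverse of $\varepsilon_A\circ\phi$.)

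Second, I would plug the formulas \eqref{construct_H-data} into $\varepsilon_A$ and use multiplicativity:
\[
\varepsilon_A(h \rightharpoonup b) \;=\; \varepsilon_A(\phi(h_{(1)}))\,\varepsilon_A(b)\,\varepsilon_A(\phi^{-1}(h_{(2)})) \;=\; \varepsilon(h_{(1)})\,\varepsilon_A(b)\,\varepsilon(h_{(2)}) \;=\; \varepsilon(h)\,\varepsilon_A(b),
\]
and similarly
\[
\varepsilon_A(\sigma(g,h)) \;=\; \varepsilon(g_{(1)})\,\varepsilon(h_{(1)})\,\varepsilon(g_{(2)}h_{(2)}) \;=\; \varepsilon(g)\,\varepsilon(h),
\]
giving the two stated identities.

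The only non-routine step is the first one, namely that $\varepsilon_A\circ\phi^{-1} = \varepsilon$; the rest is a mechanical application of the fact that $\varepsilon_A$ is an algebra map combined with counitality. So the whole proof should take only a few lines.
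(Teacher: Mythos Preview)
Your proof is correct and follows exactly the approach the paper indicates: the paper's proof consists of the single sentence ``This follows easily if one notices that $\phi^{-1}$ is augmented if $\phi$ is,'' and you have spelled out precisely that observation and the routine verification that follows from it.
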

\begin{proof}
This follows easily if one notices that $\phi^{-1}$ is augmented if $\phi$ is.  
\end{proof}

Let $B=(B, \varepsilon_B)$ be an augmented algebra. An augmented cleft $H$-comodule algebra 
$A=(A, \varepsilon_A)$ such that 
\[ A^{\mathrm{co}H} = B, \quad \varepsilon_A|_{B} = \varepsilon_B \] 
is called an \emph{augmented} $H$-\emph{cleft extension over} $B$. 
Two such extensions $A_i = (A_i, \varepsilon_{A_i})$, $i = 1,2$, are said to be \emph{isomorphic}, 
if there is an $H$-comodule algebra isomorphism $\zeta : A_1 \overset{\simeq}{\longrightarrow} A_2$ 
such that $\zeta|_B = \mathrm{id}_B$ and $\varepsilon_1 = \varepsilon_2 \circ \zeta$. Let 
\[ \operatorname{Cleft}_{\varepsilon}(H; B) \]
denote the set of the isomorphism classes $[A]$ of all augmented $H$-cleft extensions $A$ over $B$. 

\begin{corollary}\label{cor:augmented_crossed_product}
Every element of $\operatorname{Cleft}_{\varepsilon}(H; B)$ is of the form $[B \rtimes_{\sigma} H]$, where
$B \rtimes_{\sigma} H$ is the $H$-crossed product associated with some augmented 
crossed system $(\rightharpoonup, \sigma)$,
and its augmentation is given by $\varepsilon_B \otimes \varepsilon : B \rtimes_{\sigma} H \to k$. 
\end{corollary}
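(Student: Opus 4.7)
The plan is to start with an arbitrary representative $A = (A,\varepsilon_A)$ of a class in $\operatorname{Cleft}_{\varepsilon}(H;B)$, convert it to a crossed product via the (a)$\Leftrightarrow$(c) portion of Theorem \ref{thm:DT}, and then verify that both the associated crossed system and the transported augmentation have the desired form. Since $A$ is cleft, Lemma \ref{lem:augmented_section} lets me choose a section $\phi : H \to A$ that is augmented, i.e., $\varepsilon_A \circ \phi = \varepsilon$. Feeding this $\phi$ into formulas \eqref{construct_H-data} yields a crossed system $(\rightharpoonup, \sigma)$ for $H$ over $B$, and the construction in the proof of Theorem \ref{thm:DT} shows that
\[ \alpha : B \rtimes_{\sigma} H \longrightarrow A, \quad b \otimes h \mapsto b\,\phi(h) \]
is an $H$-comodule algebra isomorphism with $\alpha|_{B} = \mathrm{id}_B$.

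Next I would apply Lemma \ref{lem:augmented_data} to conclude that the crossed system $(\rightharpoonup, \sigma)$ produced from the augmented section $\phi$ is itself augmented, which is exactly the first requirement of the corollary. The only remaining point is to identify the augmentation on $B \rtimes_\sigma H$ that $\alpha$ transports from $\varepsilon_A$. This is immediate: using multiplicativity of $\varepsilon_A$, the equality $\varepsilon_A|_B = \varepsilon_B$ built into the notion of augmented extension, and the augmentation property of $\phi$, one computes
\[ (\varepsilon_A \circ \alpha)(b \otimes h) \;=\; \varepsilon_A(b)\,\varepsilon_A(\phi(h)) \;=\; \varepsilon_B(b)\,\varepsilon(h), \]
so $\varepsilon_A \circ \alpha = \varepsilon_B \otimes \varepsilon$. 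Hence $\alpha$ is an isomorphism of augmented $H$-cleft extensions over $B$, giving $[A] = [B \rtimes_\sigma H]$ as required.

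I do not expect a genuine obstacle: the corollary really is a packaging result, combining the bijection between cleft systems and crossed systems (the (a)$\Leftrightarrow$(c) half of Theorem \ref{thm:DT}) with the two preceding lemmas that upgrade ``section'' to ``augmented section'' and then to ``augmented crossed system''. The one small piece of bookkeeping worth highlighting is the last displayed calculation, which is where the specific form $\varepsilon_B \otimes \varepsilon$ of the augmentation on the crossed product is forced by the augmentation of $\phi$; everything else is a direct appeal to the earlier results.
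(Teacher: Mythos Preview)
Your proof is correct and follows exactly the route the paper intends: the paper's own proof is the single sentence ``This follows from the preceding two lemmas,'' and you have simply unpacked what that means, invoking Lemma~\ref{lem:augmented_section}, the (a)$\Leftrightarrow$(c) direction of Theorem~\ref{thm:DT}, and Lemma~\ref{lem:augmented_data} in turn. Your explicit verification that $\varepsilon_A\circ\alpha = \varepsilon_B\otimes\varepsilon$ is a useful addition, since the paper leaves that computation (and hence the fact that $\alpha$ respects the augmentations) implicit.
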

\begin{proof}
This follows from the preceding two lemmas. 
\end{proof}

Let $B=(B, \varepsilon_B)$ be as above. Set
$B^+ = \operatorname{Ker}\varepsilon_B$, the augmentation ideal. Note $B = k \oplus B^+$. 

\begin{lemma}\label{2-cocycle}
Assume $(B^+)^2=0$. Given a linear maps
\[ \triangleright : H\otimes B^+ \to B^+, \quad s : H \otimes H \to B, \]
define linear maps $\rightharpoonup : H \otimes B \to B$, $\sigma : H \otimes H \to B$ by
\begin{align}
h \rightharpoonup (c + b) &= \varepsilon(h) c + h \triangleright  b, \quad h \in H, c\in k, b \in B^+, \label{hit}\\
\sigma (g, h) &= \varepsilon(g)\varepsilon(h)1 + s(g, h), \quad g, h \in H. \label{sigma} 
\end{align}

(1)~~Necessarily, $\rightharpoonup$ gives a measuring, and $\sigma$ is $*$-invertible.

(2)~~$\rightharpoonup$ and $\sigma$
form a crossed system if and only if $B^+$ is 
a left $H$-module under $\triangleright$,
and $s$ is a normalized Hochschild 2-cocycle with coefficients in the $H$-bimodule 
$B^+ = {}_{\triangleright}B^+_{\mathrm{triv}}$, which is a left $H$-module under $\triangleright$, and
is the trivial right $H$-module. 
\end{lemma}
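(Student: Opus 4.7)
My plan is to use the decomposition $B = k \oplus B^+$ together with $(B^+)^2 = 0$ to reduce every condition to a linear statement about $\triangleright$ and $s$. Writing $u := \varepsilon \otimes \varepsilon$ for the convolution unit of $\operatorname{Hom}(H \otimes H, B)$, I note that $\sigma = u + s$, and the key observation throughout is that any convolution product $\varphi * \psi$ of two maps into $B^+$ vanishes since its values lie in $(B^+)^2 = 0$.

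For Part (1), the measuring axioms split according to $B = k \oplus B^+$: the unit condition $h \rightharpoonup 1 = \varepsilon(h) 1$ is immediate from \eqref{hit}, and multiplicativity of $\rightharpoonup$ reduces (after the $(B^+)^2 = 0$ cross-term is discarded) to a mixed $k$-and-$B^+$ identity on which both sides agree by the counit identity $\varepsilon(h_{(1)}) h_{(2)} = h$. For the $*$-invertibility of $\sigma$, I propose $\sigma^{-1} = u - s$: the computation $(u+s) * (u-s) = u - s*s$ yields $u$ since $s * s$ takes values in $(B^+)^2 = 0$, and the opposite identity is symmetric.

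For Part (2), I translate each crossed-system axiom \eqref{H-data1}--\eqref{H-data3} under the decomposition. Axiom \eqref{H-data1} immediately reduces to the unit law $1 \triangleright b = b$ on $B^+$ plus the normalization $s(h, 1) = s(1, h) = 0$. Axiom \eqref{H-data2} is automatic on the $k$-component of $b$; on the $B^+$-component, every term involving an $s$-factor has a second factor in $B^+$ and so dies, leaving exactly the associativity $g \triangleright (h \triangleright b) = (gh) \triangleright b$, which together with the unit law promotes $\triangleright$ to a left $H$-module structure on $B^+$. The substantive step is Axiom \eqref{H-data3}: substituting $\sigma = u + s$ on both sides, discarding on each side the single $s$-times-$s$ term (which lies in $(B^+)^2 = 0$), and performing counit contractions $\varepsilon(g_{(1)}) g_{(2)} = g$ along with its analogues in $h$ and $\ell$, collapses everything into
\[
\varepsilon(g)\varepsilon(h)\varepsilon(\ell)\, 1 + g \triangleright s(h, \ell) + s(g, h\ell) = \varepsilon(g)\varepsilon(h)\varepsilon(\ell)\, 1 + s(gh, \ell) + \varepsilon(\ell)\, s(g, h),
\]
which rearranges to the normalized Hochschild 2-cocycle identity for $s$ with coefficients in the $H$-bimodule ${}_{\triangleright} B^+_{\mathrm{triv}}$ (noting that the trivial right action is $m \cdot \ell = \varepsilon(\ell) m$). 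The main obstacle is the bookkeeping in this last expansion---tracking the three layers of Sweedler indices $g_{(i)}, h_{(i)}, \ell_{(i)}$ and verifying that the counit contractions reduce each of the four raw terms on each side to the clean two-sided form above.
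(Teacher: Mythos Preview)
Your proposal is correct and follows essentially the same approach as the paper: both exploit the decomposition $B = k \oplus B^+$ with $(B^+)^2 = 0$, give the explicit inverse $\sigma^{-1} = u - s$, and expand each axiom \eqref{H-data1}--\eqref{H-data3} term by term, killing the $B^+ \cdot B^+$ cross-terms and using counit contractions to obtain the unit law, associativity, normalization, and the Hochschild 2-cocycle identity. Your treatment is in fact slightly more detailed than the paper's, which simply declares Part~(1) ``easy to see'' and sketches the reductions in Part~(2) in the same order you do.
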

\begin{proof}
(1) Easy to see. Note that $\sigma^{-1}$ is given by $(g,h) \mapsto \varepsilon(g)\varepsilon(h)1 - s(g, h)$. 

(2) One sees that the condition \eqref{H-data2} is equivalent to 
the associativity of the action $\triangleright$, and the condition \eqref{H-data1} is equivalent to 
the unitality of the action $\triangleright$, 
and the normalization condition
\begin{equation}\label{Hochschild1}
s(h, 1) = 0 = s(1, h), \quad h \in H. 
\end{equation}
Recall that $s$ is called a normalized Hochschild 2-cocycle \cite[p.301]{Weibel} with 
coefficients in ${}_{\triangleright}B^+_{\mathrm{triv}}$, if
\eqref{Hochschild1} and the condition 
\begin{equation}\label{Hochschild2}
g \triangleright s(h, \ell ) + s(g, h\ell ) = s(g, h) \varepsilon ( \ell ) + s(gh, \ell ),\quad g, h, \ell \in H 
\end{equation} 
are satisfied. Let $g, h, \ell \in H$. Writing  as $\Delta(g) = \sum_{i}g_i \otimes g'_i$, 
$\Delta(h) = \sum_{j}h_j \otimes h'_j$, $\Delta(\ell ) = \sum_{k}\ell_k \otimes \ell'_k$, we compute 
\begin{align*}
\text{LHS of}~\eqref{H-data3} &= \sum_{i,j,k} [\varepsilon(g_i)\varepsilon(h_j)\varepsilon(\ell_k) +
g_i \triangleright s(h_j, \ell_k)]\, [\varepsilon(g'_i)\varepsilon(h'_j)\varepsilon(\ell'_k)+s(g'_i, h'_j\ell'_k)]\\
&= \varepsilon(g)\varepsilon(h)\varepsilon(\ell) + g\triangleright s(h, \ell) + s(gh, \ell), \\
\text{RHS of}~\eqref{H-data3} &= \sum_{i,j} [\varepsilon(g_i)\varepsilon(h_j) +
s(g_i, h_j)]\, [\varepsilon(g'_i)\varepsilon(h'_j)\varepsilon(\ell)+s(g'_ih'_j, \ell)]\\
&= \varepsilon(g)\varepsilon(h)\varepsilon(\ell) + s(g, h) \varepsilon(\ell) + s(gh, \ell).
\end{align*} 
It follows that \eqref{H-data3} is equivalent to \eqref{Hochschild2}. 
\end{proof}

\begin{prop}\label{prop:bijection}
Let $B$ be an augmented algebra such that $(B^+)^2 = 0$. By Lemma \ref{2-cocycle}, 
every pair $(\triangleright, s)$ of a left $H$-module structure $\triangleright$ on $B^+$ and a normalized
Hochschild 2-cocycle $s$ with coefficients in ${}_{\triangleright}B^+_{\mathrm{triv}}$ gives an element 
$[B \rtimes_{\sigma} H]$ in $\operatorname{Cleft}_{\varepsilon}(H; B)$, where $B \rtimes_{\sigma} H$
denotes the augmented cleft $H$-comodule algebra which is associated with $(\rightharpoonup, \sigma)$ 
defined by \eqref{hit}, \eqref{sigma}. The assignment $(\triangleright, s)\mapsto [B \rtimes_{\sigma} H]$
induces a bijection of sets,
\[ \bigsqcup_{\triangleright}\, HH^2(H, {}_{\triangleright}B^+_{\mathrm{triv}}) 
\overset{\simeq}{\longrightarrow} \operatorname{Cleft}_{\varepsilon}(H; B), \]
where $\bigsqcup_{\triangleright}$ denotes
the disjoint union with respect to all left $H$-module structures $\triangleright$ on $B^+$. 
\end{prop}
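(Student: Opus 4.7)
The plan is to leverage Corollary \ref{cor:augmented_crossed_product} and Lemma \ref{2-cocycle}(2) to pass freely between augmented cleft extensions, augmented crossed systems, and pairs $(\triangleright, s)$, and then to show that the cohomological ambiguity of $s$ (with $\triangleright$ fixed) reflects precisely the gauge freedom in choosing an augmented section.

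The map $(\triangleright, s) \mapsto [B \rtimes_\sigma H]$ is well-defined into $\operatorname{Cleft}_\varepsilon(H;B)$: Lemma \ref{2-cocycle}(2) gives a crossed system, and $\varepsilon_B \otimes \varepsilon : B \rtimes_\sigma H \to k$ is an augmentation because $h \triangleright b$ and $s(g,h)$ lie in $B^+$. Surjectivity follows from Corollary \ref{cor:augmented_crossed_product}: any $[A]$ presents as $B \rtimes_\sigma H$ for an augmented $(\rightharpoonup, \sigma)$, and augmentedness forces $h \rightharpoonup B^+ \subseteq B^+$ and $\sigma(g,h) - \varepsilon(g)\varepsilon(h)1 \in B^+$, so $(\rightharpoonup, \sigma)$ is of the form \eqref{hit}--\eqref{sigma}; Lemma \ref{2-cocycle}(2) then makes the extracted $(\triangleright, s)$ a genuine module--cocycle pair.

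The main step is classifying the gauge. Every augmented section of $B \rtimes_\sigma H$ has the shape $\phi'(h) = \tau(h_{(1)}) \otimes h_{(2)}$ for a $*$-invertible augmented $\tau : H \to B$ with $\tau(1)=1$; when $(B^+)^2 = 0$, such $\tau$ are exactly the maps $\tau = \varepsilon \cdot 1 + t$ for normalized linear $t : H \to B^+$ (the $*$-inverse $\tau^{-1} = \varepsilon \cdot 1 - t$ is then automatic). Substituting $\phi' = \tau * \phi$ into \eqref{construct_H-data}, expanding via the straightening relation $\phi(g)c = (g_{(1)} \rightharpoonup c)\phi(g_{(2)})$ for $c \in B$, and systematically killing products of two elements of $B^+$, I expect to obtain
\begin{align*}
h \triangleright' b &= h \triangleright b,\\
s'(g,h) - s(g,h) &= g \triangleright t(h) - t(gh) + \varepsilon(h)\, t(g),
\end{align*}
the second line being the Hochschild coboundary $(\partial t)(g,h)$ with coefficients in ${}_{\triangleright}B^+_{\mathrm{triv}}$. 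Thus $\triangleright$ and $[s] \in HH^2(H, {}_{\triangleright}B^+_{\mathrm{triv}})$ are invariants of $[A]$, and since every coboundary is realised by an explicit $t$, the assignment descends to the desired bijection.

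The principal obstacle is the coboundary computation itself: one has to expand $\sigma'(g,h) = \phi'(g_{(1)})\phi'(h_{(1)})\phi'^{-1}(g_{(2)}h_{(2)})$ in Sweedler notation, repeatedly interchange $\tau$ and $\phi$ through the measuring, and reduce modulo $(B^+)^2$. That reduction is what makes the final answer linear in $t$ and matches it against the Hochschild differential implicit in \eqref{Hochschild2}; sign and indexing conventions are the only real hazard, since the quadratic terms in $t$ and in $s$ all disappear by hypothesis.
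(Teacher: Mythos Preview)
Your proposal is correct and follows essentially the same route as the paper. The only difference is one of viewpoint: the paper parametrises unit-preserving, augmented, left $B$-linear, right $H$-colinear maps $f_t : B\rtimes_\sigma H \to B\rtimes_{\sigma'}H$ by normalised $t : H \to B^+$ and checks directly that $f_t$ is an algebra map if and only if $\triangleright=\triangleright'$ and $s-s'=\partial t$, whereas you parametrise augmented sections $\phi'=\tau*\phi$ of a fixed $B\rtimes_\sigma H$ and recompute the crossed system via \eqref{construct_H-data}. Since an isomorphism $\zeta$ as in the definition of $\operatorname{Cleft}_\varepsilon(H;B)$ corresponds exactly to transporting the canonical section along $\zeta$, the two computations are the same computation read in opposite directions, and your anticipated identities $\triangleright'=\triangleright$, $s'-s=\partial t$ are exactly what the paper obtains.
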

\begin{proof}
Suppose two pairs $(\triangleright, s)$, $(\triangleright', s')$ give $B \rtimes_{\sigma} H$, $B \rtimes_{\sigma'} H$,
respectively. A unit-preserving, augmented, left $B$-linear and right $H$-colinear map
$B \rtimes_{\sigma} H\to B \rtimes_{\sigma'} H$ 
is of the form
\[ f_t(b\otimes h) = \sum_j\, b(\varepsilon(h_j) + t(h_j))\otimes h_j', \quad b\in B, h\in H, \]
where $t : H \to B^+$ is a linear map with $t(1) = 0$, and we have written as 
$\Delta(h) = \sum_j\, h_j\otimes h'_j$. This $f_t$ is bijective with inverse $f_{-t}$. One sees that $f_t$
is right $B$-linear and satisfies $f_t((1\otimes g)(1\otimes h))=f(1\otimes g)f(1\otimes h)$ for all 
$g, h \in H$ if and only if $\triangleright = \triangleright'$, and $s - s'$ equals the coboundary $\partial t$
of $t$, i.e., 
\[ s(g, h) - s'(g, h) = g \triangleright t(h) - t(gh) + t(g)\varepsilon(h), \quad g, h \in H. \]
This shows that the map given in the proposition is well defined and injective. It is surjective
by Corollary \ref{cor:augmented_crossed_product}. 
\end{proof}

\begin{rem}\label{rem:split_extension}
Let $B$ be an augmented algebra. An augmented $H$-cleft extension over $B$ is said to be 
\emph{split}, if there is an augmented $H$-comodule algebra map $H \to A$, which is
necessarily $*$-invertible. Split extensions are closed under isomorphism. 
Assume $(B^+)^2 = 0$. Then one sees that through the bijection
obtained in the preceding proposition, the zero cohomology classes correspond precisely to the
isomorphism classes of all split extensions. It follows from Remark \ref{rem:gldim}
that Condition (a) of Theorem \ref{thm:equivariant_smoothness}
is equivalent to
\begin{itemize}
\item[(a$''$)] For any augmented algebra $B$ with $(B^+)^2=0$, an augmented $H$-cleft extension 
over $B$ is necessarily split.
\end{itemize}
\end{rem}

\begin{proof}[Proof of Theorem \ref{thm:equivariant_smoothness}]
Let $\pi : A \twoheadrightarrow H$ be a surjection as in (c). 

Assume (b). Then the identity map on $H$ can lift to $H \to A$. This last map is a splitting required by (c).
Therefore, (b) $\Rightarrow$ (c). 

Keep $\pi$ as above. 
We claim that $\pi$ splits $H$-colinearly. To see this, set $I = \operatorname{Ker} \pi$.
We have an integer $n > 0$ such that $I^n = 0$. We have the sequence
\[ A = A/I^n \twoheadrightarrow A/I^{n-1} \twoheadrightarrow \dots \twoheadrightarrow A/I = H \]
of surjections in the category $\mathcal{M}_A^H$ of $(H,A)$-Hopf modules; see Remark \ref{rem:Hopf_module}
below. Fix $0 < i \le n$. Then we have the short exact sequence
\[ 0 \to I^{i-1}/I^{i} \to A/I^{i} \to A/I^{i-1} \to 0 \]
in $\mathcal{M}_A^H$. Annihilated by $I$, the kernel $I^{i-1}/I^{i}$ is in $\mathcal{M}_{A/I}^H = \mathcal{M}_H^H$.
Since the Hopf module theorem shows that the kernel is cofree as an $H$-comodule, the short
exact sequence above splits $H$-colinearly. This proves the claim. 

Let $\phi : H \to A$ be an $H$-colinear splitting of $\pi$. We see that $\phi$ is $*$-invertible in 
$\operatorname{Hom}(H, A)$, since it is so modulo the nilpotent ideal $\operatorname{Hom}(H, I)$. 
By the same replacement as given in \eqref{replacement}, 
we may suppose $\phi(1) = 1$. Set $B = A^{\mathrm{co}H}$, and suppose that it is augmented by
$\pi|_B : B \to H^{\mathrm{co}H} = k$. Then $A$ is an augmented $H$-cleft extension
over $B$, with augmentation $\varepsilon_A := \varepsilon\circ \pi$. Conversely, an augmented $H$-cleft
extension $B \rtimes_{\sigma} H$ over any augmented algebra $B$ gives an $H$-comodule algebra surjection
$\varepsilon_B \otimes \mathrm{id}_H : B \rtimes_{\sigma} H \to H$.  
Note that for any integer $n>0$, $(\operatorname{Ker} \pi)^n = 0$ if and only if
$(B^+)^n =0$. 
Note also that $\pi$ splits as an $H$-comodule algebra map 
if and only if $A$ is split as an augmented $H$-cleft extension. 

We see from Remark \ref{rem:split_extension} that 
Condition (a) is equivalent to 
\begin{itemize}
\item[(c$'$)] Every surjection $\pi : A \twoheadrightarrow H$ of right $H$-comodule algebras such that
$(\operatorname{Ker} \pi)^2 = 0$ splits. 
\end{itemize}
Since (c) obviously implies (c$'$), it remains to prove (c$'$) $\Rightarrow$ (b). 

Assume (c$'$). Let $\varpi : C \to D$ be a surjection of non-zero $H$-comodule algebras with 
$J :=\operatorname{Ker}\varpi$ nilpotent. Let $\psi : H \to D$ be an $H$-comodule algebra map,
which is necessarily $*$-invertible. To see that $\psi$ can lift to some $H \to C$, we may suppose
$J^2 = 0$, by replacing $\varpi$ with $C/J^{2^i} \to C/J^{2^{i-1}}$, $i = 1,2,\dots$ 
Note that $\psi$ is injective, since it is identified with
$H \to D^{\mathrm{co}H} \rtimes H$, $h \mapsto 1 \otimes h$ through the isomorphism 
$D^{\mathrm{co}H} \rtimes H\overset{\simeq}{\longrightarrow} D$, $x \otimes h \mapsto x\psi(h)$. 
Construct from $\varphi$, $\psi$ the pull-back diagram of $H$-comodule algebras:
\begin{equation*}
\begin{CD}
A @>{\pi}>> H \\
@V{\varphi}VV @VV{\psi}V \\
C @>{\varpi}>> D
\end{CD}
\end{equation*}
(Regarding $H \subset D$ via the injection $\psi$, one may suppose $A = \varpi^{-1}(H)$.) 
Note that $\varphi$ is  an injection, and $\pi$ is a surjection
with $(\operatorname{Ker}\pi)^2 = 0$. By (c$'$), we have a splitting
$\phi$ of $\pi$. The composite $\varphi \circ \phi$ gives a desired lift of $\psi$. 
\end{proof}

\begin{rem}\label{rem:Hopf_module}
Let $A =(A, \rho_A)$ be an $H$-comodule algebra. An $(H, A)$-\emph{Hopf
module} \cite[p.144]{Mon} 
is a right $A$-module $M$ given a right $H$-comodule structure $\rho_M : M \to M \otimes H$
such that $\rho_M(ma) = \rho_M(m)\rho_A(a)$, $m \in M$, $a \in A$. All $(H, A)$-Hopf modules together with
$A$-linear and $H$-colinear maps form a category $\mathcal{M}^H_A$. Since $H = (H, \Delta)$ is an 
$H$-comodule algebra, we have the category $\mathcal{M}^H_H$. For every vector space $V$, we have 
$(V \otimes H, \mathrm{id}_V \otimes \Delta) \in \mathcal{M}^H_H$. Given $M \in \mathcal{M}^H_H$, we have a
morphism in $\mathcal{M}^H_H$, 
\[ M^{\mathrm{co}H} \otimes H \to M,\ m \otimes h \mapsto mh, \]
where $M^{\mathrm{co}H}$ is the subspace of $M$ defined just as in \eqref{coinvariants}. 
The Hopf module theorem \cite[Theorem 4.1.1, p.84]{Sw} states 
that this map is an isomorphism, and so that every object in 
$\mathcal{M}^H_H$ is cofree as an $H$-comodule, i.e., is the direct sum of some copies of $H$. 
\end{rem}

Given a coalgebra $C$, let $\operatorname{Corad} C$ denote the coradical of $C$, i.e., the (direct) sum
of all simple subcoalgebras of $C$; see \cite[p.181]{Sw}.  Here we state without  proof  the dual result of Theorem 
\ref{thm:equivariant_smoothness}, which has a nicer form with the nilpotent kernel condition
replaced by including coradicals. 

\begin{theorem}[\cite{MO}]\label{thm:dual_result}
For a Hopf algebra $H$, TFAE:
\begin{itemize}
\item[(a)] The global dimension of the coalgebra $H$ is at most 1, or equivalently, 
every quotient $H$-comodule of an arbitrary injective right or left $H$-comodule is injective;
\item[(b)] Given an inclusion $D \hookrightarrow C$ of left or right $H$-module coalgebras such that 
$D \supset \operatorname{Corad} C$, 
every $H$-module coalgebra map $D \to H$ can extend to such a map $C \to H$; 
\item[(c)] An inclusion $H \hookrightarrow C$ of $H$ into a left or right $H$-module coalgebra $C$
such that $H \supset \operatorname{Corad} C$ necessarily splits. 
\end{itemize}
\end{theorem}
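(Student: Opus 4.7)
The plan is to dualize the proof of Theorem \ref{thm:equivariant_smoothness} throughout. The dictionary I will use is: $H$-comodule algebras are replaced by $H$-module coalgebras; a surjection $A \twoheadrightarrow H$ with nilpotent kernel by an inclusion $H \hookrightarrow C$ satisfying $\operatorname{Corad} C \subset H$; the subalgebra $B = A^{\mathrm{co}H}$ by a quotient coalgebra $V = C/H^{+} C$; and crossed products $B \rtimes_{\sigma} H$ by crossed coproducts realized on $H \otimes V$. The condition $(B^+)^2 = 0$ corresponds to $C$ sitting one wedge step above $\operatorname{Corad} C$, which is the place where a cohomological classification parallel to Proposition \ref{prop:bijection} can be set up.

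The implication (b) $\Rightarrow$ (c) is immediate by taking the map to be extended to be $\mathrm{id}_H$. For (c) $\Rightarrow$ (a), given an inclusion $H \hookrightarrow C$ with $\operatorname{Corad} C \subset H$, I would first produce a right $H$-linear retraction $C \to H$. This dualizes the $H$-colinear splitting argument in the proof of Theorem \ref{thm:equivariant_smoothness}: the coradical filtration
\[
H = C_0 \subset C_1 \subset \dots \subset C_n \subset \dots, \qquad C = \bigcup_n C_n,
\]
produces successive extensions whose quotients $C_i/C_{i-1}$ carry trivial $H$-structure, and a dual Hopf module theorem identifies each such $(H,H)$-Hopf module coalgebra as a cofree object, so retractions can be built one step at a time. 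A convolution-invertibility argument (dual to the replacement \eqref{replacement}) then upgrades the $H$-linear retraction to one that is $*$-invertible and respects the coalgebra structure. In the one-step case the obstruction to splitting lives in a Hochschild cohomology group dual to $HH^2(H, V_{\mathrm{triv}})$, which vanishes by the coalgebra version of Remark \ref{rem:gldim}, delivering (a).

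For (a) $\Rightarrow$ (b), I would dualize the pullback construction at the end of the proof of Theorem \ref{thm:equivariant_smoothness} by a pushout, after first reducing to the one-wedge-step case via the coradical filtration of $C$. Given $D \hookrightarrow C$ with $D \supset \operatorname{Corad} C$ and an $H$-module coalgebra map $\delta : D \to H$, form the pushout $C' = C \sqcup_D H$ in the category of $H$-module coalgebras. The induced map $H \to C'$ is an inclusion with $\operatorname{Corad} C' \subset H$, so by (c) it admits a retraction $\rho : C' \to H$, and the composite $C \to C' \xrightarrow{\rho} H$ is the desired extension of $\delta$.

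The main obstacle I expect is the cohomological heart of (c) $\Rightarrow$ (a): formulating a clean dual Hopf module theorem for $H$-module coalgebras filtered by the coradical, and verifying that an $H$-linear retraction can be corrected into a $*$-invertible $H$-module coalgebra retraction. These steps parallel Lemma \ref{2-cocycle} and Proposition \ref{prop:bijection}, but the finite nilpotence estimates that controlled the comodule-algebra side must be replaced by conilpotence statements involving a limit along the coradical filtration, which is where the technical difficulty concentrates.
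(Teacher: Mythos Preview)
The paper does not prove this theorem: immediately before the statement it says ``Here we state without proof the dual result of Theorem~\ref{thm:equivariant_smoothness},'' citing \cite{MO}. So there is no in-paper argument to compare your proposal against.

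On the proposal itself: the dualization scheme is the right idea and mirrors the architecture of the proof of Theorem~\ref{thm:equivariant_smoothness}, but two points in your logical chain need repair. First, in your argument for (a) $\Rightarrow$ (b) you write ``so by (c) it admits a retraction''; at that moment you are assuming (a), not (c), so this is circular as stated. What you actually want is the one-step analogue (c$'$): inclusions $H\hookrightarrow C'$ with $C'\wedge H\subset H$ split. That follows from (a) via the dual of Proposition~\ref{prop:bijection} and Remark~\ref{rem:gldim}, and it is this (c$'$), not (c), that you should invoke after the pushout. Second, your paragraph on (c) $\Rightarrow$ (a) has the causality backwards: the relevant cohomology group does not ``vanish by the coalgebra version of Remark~\ref{rem:gldim}''---that remark is the \emph{translation} between (a) and cohomology vanishing, not a reason for vanishing. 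The vanishing comes from (c): every obstruction class is realized by some extension (dual Proposition~\ref{prop:bijection}), and (c) forces each such extension to split, hence each class is zero. Finally, one genuine technical difference from the algebra side that you flag but should not underestimate: the kernel in Theorem~\ref{thm:equivariant_smoothness} is nilpotent, giving a \emph{finite} tower $A/I^n\twoheadrightarrow\cdots\twoheadrightarrow H$, whereas the coradical filtration $\{C_n\}$ need not stabilize, so your inductive construction of retractions must be organized as a genuine direct-limit argument rather than a finite induction.
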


This implies the following corollary, since a cosemisimple coalgebra is the same as a coalgebra
with global dimension zero.  

\begin{corollary}[\cite{Ma}]\label{cor:weak_Chevalley}
Suppose that 
$C$ is a Hopf algebra whose coradical $H:= \operatorname{Corad} C$ is a Hopf subalgebra; this is satisfied 
if $C$ is a pointed Hopf algebra \cite[p.157]{Sw}. 
Then 
the inclusion $H \hookrightarrow C$ splits as a left (or right) $H$-module coalgebra map, so that 
the left $H$-module coalgebra $C$ is isomorphic to a smash coproduct $H\cmdblackltimes C/H^+C$. 
\end{corollary}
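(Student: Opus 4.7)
The plan is to reduce the corollary to Theorem \ref{thm:dual_result}, and to read off the smash coproduct decomposition from the resulting splitting.

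First I observe that $H = \operatorname{Corad} C$ is by definition the sum of all simple subcoalgebras of $C$, hence is a cosemisimple coalgebra. Every right (or left) $H$-comodule is then semisimple, and in particular injective, so the coalgebra global dimension of $H$ is zero; this verifies condition (a) of Theorem \ref{thm:dual_result} (with strict inequality). The Hopf subalgebra inclusion $H \hookrightarrow C$ makes $C$ into a left $H$-module coalgebra under left multiplication, the compatibility of action and comultiplication being an immediate consequence of $C$ being a bialgebra. This inclusion is itself a left $H$-module coalgebra map, and trivially $\operatorname{Corad} C \subset H$. Applying Theorem \ref{thm:dual_result}(c) yields a left $H$-module coalgebra retraction $\pi : C \to H$. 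The right-module statement is obtained by the symmetric argument.

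Given $\pi$, the smash coproduct decomposition follows from a construction dual to Lemma \ref{lem:H-crossed_product}. The quotient $R := C/H^+C$ carries a natural coalgebra structure, because $H^+$ is a coideal of $H$ (the kernel of the coalgebra map $\varepsilon$), so $H^+C$ is a coideal of $C$. Using $\pi$ one equips $R$ with the $H$-comodule structure needed to form the smash coproduct $H \cmdblackltimes R$. The intended isomorphism is
\[ \Phi : C \xrightarrow{\ \sim\ } H \cmdblackltimes R, \qquad c \longmapsto \pi(c_{(1)}) \otimes \overline{c_{(2)}}, \]
with inverse $h \otimes \overline{c} \mapsto h\, S_H(\pi(c_{(1)}))\, c_{(2)}$, where $S_H$ is the antipode of $H$. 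This inverse is well-defined on the quotient because $\varepsilon$ vanishes on $H^+$ combined with the left $H$-linearity of $\pi$, and the two compositions collapse to the identity by the antipode axiom for $H$ together with the coalgebra-map property $(\pi \otimes \pi)\Delta_C = \Delta_H \pi$.

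The main obstacle is verifying that $\Phi$ intertwines the full structure of $H \cmdblackltimes R$---not merely the underlying vector space isomorphism, but also its twisted comultiplication and left $H$-action. Concretely, one must confirm that the $H$-coaction on $R$ induced via $\pi$ is well-defined on the quotient and that the twist it introduces in the comultiplication of $H \cmdblackltimes R$ matches that of $C$ through $\Phi$. This reduces to routine Sweedler-notation calculations whose only inputs are the left $H$-linearity and coalgebra-map property of $\pi$, together with $\pi|_H = \mathrm{id}_H$; the conceptual content, once the splitting is in hand, is minimal.
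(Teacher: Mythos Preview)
Your proposal is correct and follows the same route as the paper: the paper derives the corollary from Theorem~\ref{thm:dual_result} with the single observation that a cosemisimple coalgebra has global dimension zero, exactly as you do. Your additional paragraphs spelling out the map $\Phi$ and its inverse simply make explicit the dual of the correspondence in Proposition~\ref{prop:smash_product}, which the paper leaves implicit in the phrase ``so that the left $H$-module coalgebra $C$ is isomorphic to a smash coproduct''.
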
 

\begin{rem}\label{rem:module_coalgebra}
A \emph{left} (or \emph{right}) $H$-\emph{module} (resp., $H$-\emph{comodule}) \emph{coalgebra} is a 
left (or right) $H$-module
(resp., $H$-comodule) and coalgebra $D = (D, \Delta, \varepsilon)$ such that $\Delta : D \to D \otimes D$, 
$\varepsilon : D \to k$ are both $H$-linear (resp., $H$-colinear), where the $H \otimes H$-module
(resp., $H\otimes H$-comodule) $D \otimes D$ is regarded as an $H$-module (resp., $H$-comodule) 
through the coproduct $\Delta$ 
(resp., along the product) on $H$,
and $k$ is regarded as the trivial $H$-module (resp., $H$-comodule). If $D$ is a right $H$-comodule coalgebra
with $H$-comodule structure $\rho_D : D \to D\otimes H$, $\rho_D(d) = d_{(0)}\otimes d_{(1)}$, then one constructs 
a left $H$-module coalgebra on the left $H$-module $H \otimes D$ by defining
\begin{align*}
\Delta(h \otimes d) &= [h_{(1)} \otimes (d_{(1)})_{(0)}] \otimes [h_{(2)}(d_{(1)})_{(1)} \otimes d_{(2)}],\\  
\varepsilon(h \otimes d) &= \varepsilon(h)\, \varepsilon(d) 
\end{align*}
for $h \in H$, $d \in D$. This left $H$-module coalgebra is denoted by $H\cmdblackltimes D$, and is
called the \emph{smash coproduct} associated with $\rho_D$. 
\end{rem}

\section{Application~II---tensor product decomposition in super-commutative Hopf superalgebras}
\label{sec:tensor_product_decomposition}

In what follows we assume that the characteristic $\operatorname{char} k$ of $k$ is not 2. 
The adjective $``$super" is a synonym of $``\mathbb{Z}/(2)$-graded."
Thus, a \emph{super-vector space} is a vector space, $V = V_0 \oplus V_1$, graded by 
the group $\mathbb{Z}/(2)=\{0,1\}$ of order 2. Elements $v, w, \dots$ in $V$ will be often supposed to 
be homogeneous and non-zero without explicit citation. 
In that case, $|v|$ denotes the degree of $v$, and $v$ is said to be 
\emph{even} (resp., \emph{odd}) if $|v| = 0$ (resp., $|v| = 1$).  
A \emph{superalgebra} is a $\mathbb{Z}/(2)$-graded algebra. 

The trivial symmetry 
$v \otimes w \mapsto w\otimes v$ on the tensor product of vector spaces is replaced by 
the \emph{supersymmetry} 
\[ c = c_{V,W} : V\otimes W \overset{\simeq}{\longrightarrow} W \otimes V,\ 
c(v \otimes w)= (-1)^{|v||w|} w \otimes v \]
on the tensor product on super-vector spaces $V$, $W$; 
note $c_{W,V} \circ c_{V,W} = \mathrm{id}_{V \otimes W}$.
Our assumption $\operatorname{char} k\ne 2$
ensures that this symmetry is not trivial. 
A superalgebra $A$ with product $\mu_A : A \otimes A \to A$ is said to be \emph{super-commutative}
if $\mu_A = \mu_A\circ c_{A,A}$, or explicitly if $ab = (-1)^{|a||b|}ba$,\ $a, b \in A$. If this is the case,
$A$ includes $A_0$ as a central subalgebra, and $a^2=0$ if $a\in A_1$. 
 The tensor product $\otimes$ will be denoted 
by $\underline{\otimes}$, if the supersymmetry effects on it. Thus, given superalgebras $A$, $B$
(with products $\mu_A$, $\mu_B$, respectively), we have the superalgebra $A\, \underline{\otimes}\, B$
whose product is given by
\[ (\mu_A \otimes \mu_B)\circ (\mathrm{id}_A \otimes c_{B,A} \otimes \mathrm{id}_B) : 
(A\otimes B)\otimes (A\otimes B) \to A \otimes B, \]
or more explicitly by
\[ (a \otimes b)(a' \otimes b') = (-1)^{|b||a'|} aa' \otimes bb'. \]
A \emph{super-bialgebra} is a superalgebra and coalgebra $A=(A,\Delta, \varepsilon)$ such that
$\Delta : A \to A\, \underline{\otimes}\, A$, $\varepsilon ; A\to k$ are both superalgebra maps. 
Suppose that the identity map $\mathrm{id}_A$
on a super-bialgebra $A$ has a $*$-inverse $S$ in $\operatorname{Hom}(A,A)$; it is called an \emph{antipode}
of $A$.  Then $S$ necessarily preserves the $\mathbb{Z}/(2)$-grading, and satisfies
\[  S\circ \mu_A = \mu_A\circ (S \otimes S)\circ c_{A,A},\quad 
\Delta \circ S = c_{A,A}\circ (S \otimes S)\circ \Delta. \]
A \emph{Hopf superalgebra} is a super-bialgebra with antipode. 

Super-objects $A$, such as super-vector spaces or (Hopf) superalgebras, are said to be \emph{purely
even}, if $A_0 = A$, $A_1 = \{0\}$. These are the same as ordinary objects, such as vector spaces or
(Hopf) algebras. In other words, ordinary objects, as purely even super-objects, form a special class
of super-objects. 

The linear dual $V^*= \operatorname{Hom}(V, k)$ of a super-vector space $V$ is a super-vector space with 
$(V^*)_i =(V_i)^*$, $i=0,1$. If $A$ is a finite-dimensional Hopf superalgebra, the super-vector space $A^*$ 
is a Hopf superalgebra with respect to the dual algebra (resp., coalgebra) structure of the coalgebra (resp.,
algebra) $A$. 

\begin{example}
Let $V$ be a super-vector space which is purely odd in the sense  $V_0 = \{0\}$, $V_1 = V$. 
The exterior algebra $A= \wedge(V)$ 
forms a Hopf superalgebra in which every element $v$ in $V$ is supposed to be \emph{primitive}, i.e.,
\[ \Delta(v) = 1 \otimes v + v \otimes 1, \quad \varepsilon(v) = 0. \] 
This is super-commutative, and is super-cocommutative
in the sense $\Delta = c_{A,A}\circ \Delta$. Suppose $\dim V < \infty$. The bilinear maps 
$\langle~~,~\rangle : \wedge^n(V^*) \times \wedge^n(V) \to k$, $n=1, 2,\dots$ given by
\[ \langle f_1\wedge \dots \wedge f_n, v_1 \wedge \dots \wedge v_n \rangle =
\sum_{\sigma \in \mathfrak{S}_n} (\mathrm{sgn} \, \sigma) \, f_1(v_{\sigma(1)})
\dots f_n(v_{\sigma(n)}) \]
amount to a bilinear map 
\[ \langle~~,~\rangle : \wedge(V^*) \times \wedge(V) \to k \]
such that $\langle 1, 1 \rangle = 1$, and $\langle \wedge^m(V^*), \wedge^n(V)\rangle = 0$ unless $m=n$.  
This last bilinear map is non-degenerate, and gives rise to an isomorphism of Hopf superalgebras,
\begin{equation}\label{duality}
\wedge(V^*) \overset{\simeq}{\longrightarrow} (\wedge(V))^*. 
\end{equation}
\end{example} 

Just as commutative Hopf algebras correspond to affine group schemes (see \cite[Chapter 1]{Waterhouse}), super-commutative
Hopf superalgebras correspond to \emph{affine supergroup schemes} \cite[Section~11.1]{CCF}.  
Let $A$ be a super-commutative
Hopf superalgebra. The corresponding \emph{affine supergroup scheme} $\operatorname{SSp} A$
is the group-valued functor 
defined on the category $\mathbf{SAlg}_k$ of super-commutative superalgebras
over $k$, which associates to each $R \in \mathbf{SAlg}_k$, the group $\mathbf{SAlg}_k(A, R)$
of all superalgebra maps $A \to R$; this is indeed a group with respect to the
$*$-product given by \eqref{*-product}. The one-sided (necessarily, two-sided) ideal 
\[ AA_1=A_1^2 \oplus A_1 \, (= A_1A) \]
generated by $A_1$ 
is a super-ideal of $A$, and the quotient superalgebra
\begin{equation}\label{H}
H := A/A_1 = A_0/A_1^2
\end{equation}
is a purely even Hopf superalgebra which represents the subgroup functor $R \mapsto \mathbf{SAlg}_k(A, R_0)$
of $\operatorname{SSp}A$. Just for affine group schemes, the cotangent super-vector space of 
at unity is given by $A^+/(A^+)^2$, where $A^+ = \operatorname{Ker} \varepsilon$. 
We denote 1-component in $A^+/(A^+)^2$ by
\begin{equation}\label{W}
W := (A^+/(A^+)^2)_1 = A_1/A_0^+A_1, 
\end{equation}
where $A_0^+ = A_0 \cap A^+$. 
Let $\pi : A \to H$ denote the quotient Hopf superalgebra map. 
Note that $H$ is an ordinary Hopf algebra, and the superalgebra $A$ 
is an $H$-comodule algebra
by the superalgebra map $(\mathrm{id}_A\otimes \pi)\circ \Delta : A \to A \otimes H$. 
  
\begin{theorem}[\cite{M1}]\label{thm:tensor_product_decomposition}
Let $A$ be a super-commutative Hopf superalgebra, and let $H$, $W$ be as in \eqref{H}, \eqref{W}. Then there
is a right $H$-colinear isomorphism of superalgebras  
\[ \alpha : A \overset{\simeq}{\longrightarrow} \wedge(W) \otimes H \]
which is augmented, i.e., $\varepsilon = (\varepsilon\otimes \varepsilon)\circ \alpha$. 
\end{theorem}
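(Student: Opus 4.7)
The plan is to apply Theorem \ref{thm:DT} and Proposition \ref{prop:smash_product} to the right $H$-comodule superalgebra structure on $A$ given by $\rho_A = (\mathrm{id}_A \otimes \pi) \circ \Delta$, setting $B := A^{\mathrm{co}H}$. The argument then breaks into three stages: (i) constructing an augmented, right $H$-colinear superalgebra map $\phi : H \to A$ whose image lies in the even part $A_0$; (ii) using super-commutativity to show that the resulting smash-product action is trivial, so that $A \simeq B \otimes H$ as right $H$-comodule superalgebras; and (iii) identifying $B$ with $\wedge(W)$.

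Stage (i) is the heart of the proof. Such a $\phi$ is precisely a right $H$-comodule algebra section of the surjection $\pi_0 : A_0 \twoheadrightarrow A_0/A_1^2 = H$ of ordinary commutative Hopf algebras. Since $A_1^2$ need not be nilpotent when $A$ is infinitely generated, Theorem \ref{thm:equivariant_smoothness} does not apply directly; the workaround is to reduce to the finitely-generated case. One writes $A$ as a directed colimit of Hopf subsuperalgebras $A^{(\lambda)} \subset A$ for which $(A^{(\lambda)})_1$ is finite-dimensional; then $((A^{(\lambda)})_1)^2$ is finitely generated and consists of square-zero elements (since $a^2 = 0$ for every odd $a$ by super-commutativity and $\operatorname{char} k \neq 2$), hence is a nilpotent ideal of the commutative algebra $(A^{(\lambda)})_0$. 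On each $A^{(\lambda)}$ one constructs $\phi^{(\lambda)}$ by step-by-step lifting along the filtration by powers of this nilpotent kernel, in the style of the proof of Theorem \ref{thm:equivariant_smoothness} carried out in the purely commutative category (where the relevant obstructions in the Hochschild cohomology of the commutative Hopf algebra $H^{(\lambda)}$ can be shown to vanish). A Zorn's-lemma argument on compatible partial sections assembles a global $\phi$.

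Once $\phi$ is available, Proposition \ref{prop:smash_product} yields an $H$-comodule algebra isomorphism $A \simeq B \rtimes H$ whose action is
\[ h \rightharpoonup b = \phi(h_{(1)})\, b\, \phi^{-1}(h_{(2)}), \qquad b \in B,\ h \in H. \]
Because $\phi$ is a unital algebra map into $A_0$, its $*$-inverse coincides with $\phi \circ S$ and still lies in $A_0$. Super-commutativity of $A$ therefore gives $\phi(h_{(1)})\, b = b\, \phi(h_{(1)})$ for every $b \in A$, and the above formula collapses to $h \rightharpoonup b = \varepsilon(h)\, b$. Thus the smash product $B \rtimes H$ coincides with the plain tensor-product superalgebra $B \otimes H$ (no signs intervene since $H$ is purely even), and we obtain an augmented right $H$-colinear superalgebra isomorphism $A \simeq B \otimes H$.

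For stage (iii), the decomposition $A \simeq B \otimes H$ gives $A_1 \simeq B_1 \otimes H$ and, after a short calculation, $W = A_1/A_0^+ A_1 \simeq B_1/B_0^+ B_1$; similarly $H = A_0/A_1^2$ combined with $A_0 = B_0 \otimes H$ forces $B_0 = k \oplus B_1^2$. Choose a $k$-linear lift $W \to B_1 \subset A_1$; since every element of $B_1$ squares to zero in the super-commutative superalgebra $B$, the universal property of the exterior algebra produces a superalgebra map $\iota : \wedge(W) \to B$. Surjectivity of $\iota$ follows from $B_0 = k + B_1^2$ by an induction on odd length, and injectivity is a PBW-style dimension comparison between the odd-length filtration on $B$ and the standard grading on $\wedge(W)$. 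Composing $\iota \otimes \mathrm{id}_H$ with the inverse of the isomorphism from stage (ii) produces the desired $\alpha$. The main obstacle of the whole proof is stage (i): producing a superalgebra-and-comodule section of $\pi_0$ in the absence of a global nilpotence hypothesis.
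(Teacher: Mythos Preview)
Your Stage~(i) contains the real gap. You assert that the surjection $\pi_0 : A_0 \twoheadrightarrow H$ admits an $H$-comodule \emph{algebra} section, appealing to ``vanishing of the relevant obstructions in the Hochschild cohomology of the commutative Hopf algebra $H^{(\lambda)}$.'' First, $A_0$ is not a Hopf subalgebra of $A$ (the coproduct sends $A_0$ into $A_0\otimes A_0\oplus A_1\otimes A_1$), so $\pi_0$ is merely a map of commutative $H$-comodule algebras. More seriously, the paper works over any field with $\operatorname{char}k\ne 2$, and in odd positive characteristic a commutative Hopf algebra $H$ need not satisfy the lifting property of Theorem~\ref{thm:equivariant_smoothness}: that property is equivalent to $\operatorname{gl.dim}H\le 1$, which fails for $H=k[t]/(t^p)$ with $t$ primitive, for instance. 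So ``commutativity'' alone does not make the obstructions vanish; any argument here must use the specific provenance of $A_0$ inside the Hopf superalgebra $A$, and you give none. In fact, the existence of such an algebra section is essentially equivalent to the theorem you are trying to prove (compose $\alpha^{-1}$ with $h\mapsto 1\otimes h$), so as written the argument is circular.

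The paper sidesteps this entirely. It takes $\phi:H\to A_0$ to be only a unit-preserving $H$-\emph{colinear} splitting---this exists by the Hopf-module argument once $\operatorname{Ker}\pi_0$ is nilpotent---and accepts that the resulting identification $A\simeq B\rtimes_\sigma H$ may carry a nontrivial cocycle $\sigma$ (the measuring is trivial by super-commutativity, as you observed, but $\sigma$ need not be). The superalgebra map to $\wedge(W)\otimes H$ is produced by an independent device: one builds a superalgebra map $\delta:A\to\wedge(W)$ by dualizing a super-coalgebra map $\wedge(U)\to A^{\circ}$ with $U=W^*$ the odd primitives of $A^{\circ}$, and then sets $\alpha=(\delta\otimes\pi)\circ\Delta$. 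This is automatically multiplicative because $\Delta$, $\delta$, $\pi$ are, and the crossed-product description is used only to check that $\alpha$ is bijective (via a Nakayama-type argument modulo the nilpotent $B^+$). Your Stage~(iii) ``PBW-style dimension comparison'' is likewise replaced by showing that $\operatorname{gr}(\delta|_B)$ is an isomorphism, using $B^+/(B^+)^2\simeq W$ established in Step~1.
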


This result pays an essential role to prove a fundamental result on quotients of affine algebraic 
supergroup schemes (see \cite[Theorem 0.1]{MZ}), and to prove a category equivalence between the finitely generated
super-commutative Hopf superalgebras and the Harish-Chandra pairs (see \cite[Theorem~4.23]{M2}).

\begin{proof}
To prove this, 
we assume that $A$ is finitely generated as an algebra over the central subalgebra $A_0$.
The result in this special case, applied to an argument using the inductive limit and the Zorn
Lemma, proves the result in general; see \cite[pp.302--303]{M1}. It is known that the assumption above 
is equivalent to that $\dim W< \infty$, and the kernel 
\[ A_1^2 \oplus A_1 = \operatorname{Ker}( \pi : A \to H ) \]
of the quotient map is nilpotent; see \cite[Proposition 4.4]{M1}.     

\vspace{3mm}
\emph{Step 1}.  
Set $B = A^{\mathrm{co}H}$; this is a super-subalgebra of $A$. 
Suppose that $B$ is augmented by the restriction $\varepsilon |_B : B \to k$ of the counit $\varepsilon$ on $A$. 
Write $B^+ = A^+ \cap B$, $B_0^+ = A^+ \cap B_0$.   
Then one defines, just as the $W$ for $A$,  
\[ W_B = B_1/B_0^+B_1. \]
We claim 
\begin{itemize}
\item[(a)] $B_0^+ = B_1^2$;
\item[(b)] $W_B = B^+/(B^+)^2$, or in other words, $B_0^+/((B_0^+)^2 + B_1^2) =0$; 
\item[(c)] The inclusion $B \hookrightarrow A$ of augmented superalgebras induces an
isomorphism, $W_B \overset{\simeq}{\longrightarrow} W$. 
\end{itemize}

Let $\pi_0 : A_0 \to A_0/A_1^2 =H$ denote the quotient map.  
Note that $A_0$ is an $H$-comodule subalgebra of $A$, and $\pi_0 = \pi|_{A_0}$. 
Since the kernel 
$\operatorname{Ker}\pi_0$ of $\pi_0$ is nilpotent, we obtain, as in the proof of Theorem 
\ref{thm:equivariant_smoothness}, a unit-preserving $H$-colinear splitting $\phi : H \to A_0$ 
of $\pi_0$, which is necessarily $*$-invertible. 
Regard this $\phi$ as a map $H \to A$; it is a unit-preserving 
$H$-colinear splitting of $\pi$, which is $*$-invertible and $\mathbb{Z}/(2)$-graded. 
Recall from the proof of Theorem \ref{thm:DT} that the crossed system
$(\rightharpoonup, \sigma)$ given by \eqref{construct_H-data}
constructs an $H$-crossed product $B\rtimes_{\sigma}H$,   
and we have an $H$-comodule algebra isomorphism which is identical on $B$, 
\begin{equation}\label{isom}
B \rtimes_{\sigma} H \overset{\simeq}{\longrightarrow} A, \ b \otimes h \mapsto b\, \phi(h), 
\end{equation}
Note that $h \rightharpoonup b = \varepsilon(h)\, b$ for all $h \in H$, $b \in B$,   
and the last isomorphism preserves the $\mathbb{Z}/(2)$-grading, where $(B \rtimes_{\sigma} H)_i
= B_i \otimes H$, $i = 0, 1$. Since $\pi(b) = \varepsilon(b)1$ for all $b \in B$, the isomorphism \eqref{isom}
induces
\[ \frac{B \rtimes_{\sigma} H}{B^+ \otimes H} \overset{\simeq}{\longrightarrow} H. \]
Therefore, $B^+ \otimes H$ in $B \rtimes_{\sigma} H$ coincides with the (right) ideal generated by $B_1 \otimes H$,
whence we have 
\[ B^+ \otimes H = B_1B \otimes H. \] 
Hence, $B^+ = B_1B~(= B_1^2 \oplus B_1)$, which proves (a). Obviously, (b) follows from (a). 

We identify $A = B \rtimes_{\sigma} H$ through the isomorphism \eqref{isom}. 
Since $\phi$ preserves the counit, the counit $\varepsilon$ on $A$ is translated to  
$\varepsilon \otimes \varepsilon : B \rtimes_{\sigma} H\to k$. We see 
that on $A = B \rtimes_{\sigma} H$, 
\[ A_0^+A_1 = (B_0^+ \otimes H  + B_0 \otimes H^+)(B_1 \otimes H) 
= B_0^+B_1 \otimes H + B_1\otimes H^+, \] 
since $\sigma(H^+ \otimes H) \subset B_0^+$, and so
\[  (B_0 \otimes H^+)(B_1 \otimes H) 
\subset B_0^+B_1 \otimes H + B_1\otimes H^+. \]              
It follows that $ W = W_B \otimes k$, which proves (c).  

\vspace{3mm}
\emph{Step 2}. We claim that there exists an augmented isomorphism $\gamma :
B \overset{\simeq}{\longrightarrow} \wedge(W)$ of superalgebras. Let $A^{\circ}$ be the
\emph{dual coalgebra} \cite[Sect.6.0]{Sw} of the algebra $A$. 
Note that the dual algebra $A^*$ of the coalgebra $A$ is a superalgebra. 
Recall from \cite[Proposition 6.0.3, p.115]{Sw} that $A^{\circ}$ is the pull-back of the super-subalgebra
$A^*\,\underline{\otimes}\, A^*$ of the superalgebra $(A \, \underline{\otimes}\, A)^*$ along the dual $\mu^* : A^* \to 
(A \,\underline{\otimes}\, A)^*$ of the product $\mu : A \,\underline{\otimes}\, A \to A$ on $A$. Since $\mu^*$
is a superalgebra map, one sees that $A^{\circ}$ is a superalgebra, and is, moreover, a
Hopf superalgebra. We have a canonical map
$A \to (A^{\circ})^*$
of superalgebras. 
Analogously to the ordinary situation (see \cite[Section 12.2]{Waterhouse}), 
the vector space 
\[ U := \{ u \in (A^{\circ})_1 \mid \Delta(u) = 1 \otimes u + u \otimes 1 \} \] 
of all odd primitives in $A^{\circ}$ is dual to $W$, i.e., $U = W^*$, which gives rise to a canonical
isomorphism $(\wedge(U))^* \simeq \wedge(W)$ as in \eqref{duality}. See \cite[Proposition 4.3(2)]{M1}
for more details. 

Choose an arbitrary basis $u_1, u_2, \dots u_n$ of $U$, and let $\iota : \wedge(U) \to A^{\circ}$ 
be the linear map defined on the natural basis of $\wedge(U)$ so that
\[ \iota(1) =1, \quad \iota(u_{i_1}\wedge u_{i_2} \wedge \dots \wedge u_{i_r}) = u_{i_1}u_{i_2}\dots u_{i_r},\]
where 
$1\le i_1< i_2 <\dots <i_r \le n$, $1\le r \le n$.  Let
\[ \delta : A \to (A^{\circ})^* \overset{\iota^*}{\longrightarrow} (\wedge(U))^* \overset{\simeq}{\longrightarrow} 
\wedge(W) \]
be the composite of the dual $\iota^*$ of $\iota$ with the canonical map and isomorphism above. 
Define 
$\gamma = \delta|_{B} : B \to \wedge(W)$. Since $\iota$ is a unit-preserving super-coalgebra map, 
$\gamma$ and $\delta$ are both augmented superalgebra map. To prove the claim, it remains 
to show that $\gamma$ is an isomorphism. For this, it is enough to show that 
the graded algebra map
\[ \operatorname{gr} \gamma : \operatorname{gr}B = \bigoplus_{i=0}^{\infty}(B^+)^i/(B^+)^{i+1}
\to \operatorname{gr} (\wedge(W)) = \wedge(W), \]
between the graded algebras associated with the filtrations given by powers of the augmentation ideals, 
is an isomorphism. Obviously, $(\operatorname{gr}\gamma)_0$ is the identity map on $k$.
The results (b), (c) in Step 1 show that $(\operatorname{gr}\gamma)_1 : B^+/(B^+)^2 \to W$ is isomorphic. 
Since $\operatorname{gr}B$ is super-commutative, there uniquely exists a graded 
algebra map $\wedge(W)\to \operatorname{gr}B$
which extends $(\operatorname{gr}\gamma)_1^{-1}$.
This last graded algebra map is seen to be an inverse of $\operatorname{gr}\gamma$, since  
$\operatorname{gr}B$ is generated by $(\operatorname{gr}B)_1$. 

\vspace{3mm}
\emph{Step 3}. 
Identify $B = \wedge(W)$ through $\gamma$ obtained above. Note that $B^+$ is nilpotent. Define
$\alpha : A \to \wedge(W) \otimes H$ by
\[ \alpha(a) = \delta(a_{(1)}) \otimes \pi(a_{(2)}),\quad a \in A. \]
We see that $\alpha$ is an augmented,
right $H$-colinear superalgebra map.
Moreover, $\alpha$ is a left $B$-linear map to a free $B$-module,
which is an isomorphism modulo the nilpotent ideal $B^+$, as is seen from the isomorphism \eqref{isom}.
Hence, $\alpha$ is an isomorphism as desired.  
\end{proof}

\section*{Acknowledgments}
These notes are based on the author's talks at \emph{Conference on Ring Theory} 
held at the Sun Yat-Sen University, Guangzhou in P.~R.~China, 
June 24--30, 2012.
The author thanks Professor Xiaolong Jiang for her very kind hospitality. 
He also thanks Professor George Szeto and Professor Sh\^{u}ichi Ikehata
for their friendly support.

\end{document}